\numberwithin{equation}{section}
\newcommand{\scrF}{\mathscr{F}}
\newcommand{\scrG}{\mathscr{G}}
\newcommand{\scrM}{\mathscr{M}}
\newcommand{\NN}{\mathbb{N}}
\newcommand{\RR}{\mathbb{R}}
\newcommand{\eps}{\varepsilon}
\renewcommand{\epsilon}{\eps}
\renewcommand{\phi}{\varphi}
\renewcommand{\theta}{\vartheta}
\newcommand{\mathd}{\mathrm{d}}
\newcommand{\LN}{\mathscr{L}^N}
\newcommand{\Hnmu}{\mathscr{H}^{N-1}}
\newcommand{\Snmu}{\mathbb{S}^{N-1}}
\renewcommand{\hat}{\widehat}
\newcommand{\mz}{\setminus\{0\}}
\newcommand{\RNmz}{\RR^N\setminus \{0\}}
\newcommand{\EEE}{\color{magenta}}
\newcommand{\BBB}{\color{black}}
\theoremstyle{plain}
\newtheorem{theorem}{Theorem}[section]
\newaliascnt{proposition}{theorem}
\newtheorem{proposition}[proposition]{Proposition}
\newaliascnt{corollary}{theorem}
\newtheorem{corollary}[corollary]{Corollary}
\newaliascnt{lemma}{theorem}
\newtheorem{lemma}[lemma]{Lemma}
\newaliascnt{definition}{theorem}
\theoremstyle{definition}
\newaliascnt{remark}{theorem}
\newtheorem{remark}[remark]{Remark}
\newaliascnt{example}{theorem}
\newtheorem{example}[example]{Example}
\title[Sharp conditions for the BBM formula]{Sharp~conditions~for~the~validity of~the~Bourgain-Brezis-Mironescu~formula}
\date{\today}
\author[E. Davoli]{Elisa Davoli}
\author[G. Di Fratta]{Giovanni Di Fratta}
\author[V. Pagliari]{Valerio Pagliari}	
\begin{document}
	\begin{abstract}
		Following the seminal paper by Bourgain, Brezis and Mironescu,
        we focus on the asymptotic behavior of some nonlocal functionals
        that, for each $u\in L^2(\RR^N)$, are defined
        as the double integrals of weighted, squared difference quotients of $u$.
        Given a family of weights $\{\rho_\eps\}$, $\eps\in(0,1)$,
        we devise sufficient and necessary conditions on $\{\rho_\eps\}$
        for the associated nonlocal functionals to converge as $\eps \to 0$
        to a variant of the Dirichlet integral.
        Finally, some comparison between our result and the existing literature is provided.    
		
		\medskip
		\noindent
		{\it 2020 Mathematics Subject Classification:}
		 26A33;	
		 28A33; 
		 49J45; 
		
		\smallskip
		\noindent
		{\it Keywords and phrases:}
		nonlocal functionals;
		Bourgain-Brezis-Mironescu formula;
		fractional kernels;
		Gagliardo seminorm.
	\end{abstract}
	
	\maketitle

\section{Introduction}\label{sec:intro}
Let $J \coloneqq (0, 1)$ and let $u\colon \RR^N \to \RR$ be an $L^2$ function.
Given the family of kernels $\{\rho_\eps\}_{\eps\in J}$,
with $\rho_\eps\colon \RR^N \to [0,+\infty)$ measurable,
we consider the energy functionals
	\begin{equation}\label{eq:energy}
		\scrF_\eps [ u ] \coloneqq
		\frac{1}{2}\int_{\RR^N \times \RR^N}
			\rho_\eps (y-x)
			\frac{| u (y) - u (x) |^2}{| y - x |^2} 
		\mathd y \mathd x.
\end{equation}
We aim at characterizing the class of kernels such that
for every $u\in H^1(\RR^N)$ the family
$\{\scrF_\eps [ u ]\}$ converges to (a variant of) $\| \nabla u \|^2_{L^2(\RR^N)}$
as $\eps\to0$,
see~\autoref{thm:main}.

Our study follows the line of research initiated
 in the renowned paper \cite{BBM01}.
The motivation advanced by the authors was the analysis of the the Gagliardo seminorms
\[
    [u]^p_s \coloneqq \int_{\RR^N \times \RR^N}
			\frac{| u (y) - u (x) |^2}{| y - x |^{N+sp}} 
		\mathd y \mathd x,
	\qquad \text{with } p\in(1,+\infty),s\in(0,1)
\]
as $s\to 1$.
They studied the asymptotics as $\eps \to 0$ of double integrals with the same structure as the ones in \eqref{eq:energy} for a family $\{\rho_\eps\}\subset L^1(\RR^N)$ of radial kernels
and a general exponent $p\in(1,+\infty)$,
and they proved that the Sobolev seminorm $\|\nabla u\|^p_{L^p(\RR^N)}$ is retrieved in the limit.
The case of the Gagliardo seminorms may be treated analogously,
upon taking some extra care of the tails of the fractional kernel
(see, e.g.,~\cite[Sec. 1]{LS11}).

The literature on nonlocal-to-local formulas has become extremely vast, and
a detailed overview is beyond the scope of our contribution. 
Here, we restrict ourselves
to the research that is most close in spirit to \cite{BBM01}.
The gap left open for the case $p=1$ was filled in \cite{Dav02}, 
where a characterization of functions of bounded variation was provided
(see also~\cites{Pon04,LS11}).
The case of vector fields of bounded deformations was later addressed in \cite{Men12}
by considering a suitable symmetrization
of the functionals in \eqref{eq:energy}
(see also \cite{MQ15} for the asymptotics of nonlocal elastic energies of peridynamic-type and
\cite{SM19} for a study of fractional Korn inequalities).
The analysis of the asymptotic behavior
in the sense of $\Gamma$-convergence \cite{DM93}
of the fractional perimeter functionals introduced in \cite{CRS10} 
was undertaken in \cite{ADM11}, and
then extended in multiple directions
by several contributions, 
e.g.~\cite{BP19,Pag20,DKP22,MRT19}.
Finally, we point out that
a general variational framework for the analysis
of (static and dynamic) multiscale problems
that feature nonlocal interactions
has been very recently considered in the monograph~\cites{AAB22},
again for kernels that,
in our notation, are required to form a definitively bounded sequence in $L^1$.

A common trait of the works above is that
they only concern {\em sufficient conditions} for the nonlocal-to-local formulas to hold.
In the specific case of the functionals in~\eqref{eq:energy}
(see \autoref{thm:ponce} below for a prototypical statement),
this means that,
given a measurable map $\rho_\eps\colon \RR^N \to [0,+\infty)$ for every $\eps\in J$,
a set of conditions on the family $\{\rho_\eps\}_{\eps\in J}$ is prescribed,
so that the following can be deduced:
there exist	an infinitesimal sequence $\{\eps_k\}\subset J$ and
a positive Radon measure $\lambda$ on the unit sphere $\Snmu$
that depends only on $\{\rho_{\eps}\}$ such that
for every $u\in H^1(\RR^N)$
	\begin{equation}\label{eq:BBM}
			\lim_{k\to+\infty} \scrF_{\eps_k}[u]
			= \int_{\RR^N} \int_{\Snmu} | \nabla u(x) \cdot \sigma |^2 \mathd \lambda(\sigma) \mathd x.	
   \end{equation}
We refer to such equality as the
{\em Bourgain-Brezis-Mironescu formula},
in short BBM formula.
The novelty of our contribution is that
{\em we devise conditions that are both necessary and sufficient}
for \eqref{eq:BBM} to hold
(see also \autoref{subsec:comments} for some remarks about energies with non-quadratic growth).
Precisely, we establish the following.

\begin{theorem}[Necessary conditions for the BBM formula]\label{thm:main}
    For every $\eps\in J$,
	let $\rho_\eps\colon \RR^N \to [0,+\infty)$ be measurable and
	let $\scrF_\eps$ be as in~\eqref{eq:energy}.
	Let also $\lambda$ be a fixed positive Radon  measure
	on the unit sphere $\Snmu$.
	
	Suppose that
	there exists an infinitesimal sequence $\{\eps_k\}\subset J$
	such that for every $u\in H^1(\RR^N)$
	the Bourgain-Brezis-Mironescu formula \eqref{eq:BBM} holds for the given measure $\lambda$.
	Then,
	the sequence $\{\rho_{\eps_k}\}$ satisfies the following:
	\begin{enumerate}[label=\emph{(\roman*)}]
	\item\label{i} there exists $M\geq0$ with the property that
	    	for every $R>0$
            \begin{gather}\label{eq:nec}
            \limsup_{k\to+\infty}\left[
            \int_{B(0,R)} \rho_{\eps_k}(z)\mathd z
            +
            R^2 \int_{B(0,R)^c} \frac{\rho_{\eps_k}( z)}{| z |^2} \mathd z
            \right]
            \leq M;
            \end{gather}
	 \item\label{ii} the sequence $\{\nu_k\}$ of Radon measures on $\RR^N$ defined by
        \begin{equation}\label{eq:nuk}
	 		\langle \nu_k , f \rangle
	 			\coloneqq  \int_{\RR^N} \rho_{\eps_k}(z) f(z) \mathd z
	 		\qquad\text{for all } f\in C_c(\RR^N).
        \end{equation}
	 	locally weakly-$\ast$ converges in the sense of Radon measures
        to $\alpha \delta_0$,
        where $\alpha\geq 0$ is a positive constant,
        and $\delta_0$ is the Dirac delta in $0$.
	\end{enumerate}
\end{theorem}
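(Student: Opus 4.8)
\noindent\emph{Plan of proof.} The plan is to pass to Fourier variables, use one rescaled test function to get~(i), and then exploit the freedom in the test functions to force the limiting measure to be a multiple of $\delta_0$.

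The starting point is to rewrite $\scrF_{\eps_k}$ in Fourier variables. Setting $z=y-x$ and using Plancherel's theorem,
\[
\scrF_{\eps_k}[u]=\frac12\int_{\RR^N}\frac{\rho_{\eps_k}(z)}{|z|^2}\,\|u(\cdot+z)-u\|_{L^2(\RR^N)}^2\,\mathd z
=2\int_{\RR^N}|\hat u(\xi)|^2\,\Phi_k(\xi)\,\mathd\xi,\qquad \Phi_k(\xi):=\int_{\RR^N}\frac{\sin^2(z\cdot\xi/2)}{|z|^2}\,\mathd\nu_k(z),
\]
with $\nu_k$ as in~\eqref{eq:nuk}; likewise the right-hand side of~\eqref{eq:BBM} equals $\int_{\RR^N}|\hat u(\xi)|^2\,\Psi(\xi)\,\mathd\xi$ with $\Psi(\xi):=\int_{\Snmu}|\xi\cdot\sigma|^2\,\mathd\lambda(\sigma)$. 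Since both functionals are additive with respect to the decomposition of $u$ into real and imaginary parts, the hypothesis~\eqref{eq:BBM} is equivalent to the statement that $\int_{\RR^N}g\,(2\Phi_k-\Psi)\,\mathd\xi\to0$ as $k\to\infty$ for every nonnegative $g\in L^1(\RR^N)$ with $\int_{\RR^N}|\xi|^2g<\infty$ (take $g=|\hat u|^2$, $u\in H^1(\RR^N)$).

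For~(i) I would test~\eqref{eq:BBM} with one well-chosen $u$ and then rescale. Let $u\in H^1(\RR^N)$ be the real function with $\hat u=\mathbf 1_{\{1/2\le|\xi|\le1\}}$ (the annulus being centrally symmetric, $u$ is real). Then $\|u(\cdot+z)-u\|_{L^2}^2=4\int_{\{1/2\le|\xi|\le1\}}\sin^2(z\cdot\xi/2)\,\mathd\xi$ is continuous and strictly positive for $z\neq0$; a second-order Taylor expansion together with the rotational symmetry of the annulus shows it is comparable to $|z|^2$ near the origin, while the Riemann--Lebesgue lemma shows it stays bounded away from $0$ as $|z|\to\infty$. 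Hence $\|u(\cdot+z)-u\|_{L^2}^2\ge c\min(|z|^2,1)$ for some $c>0$, and applying this to the rescalings $u_R:=u(\cdot/R)$ yields
\[
\scrF_{\eps_k}[u_R]\ \ge\ \frac{c}{2}\,R^{N-2}\Big[\int_{B(0,R)}\rho_{\eps_k}(z)\,\mathd z+R^2\int_{B(0,R)^c}\frac{\rho_{\eps_k}(z)}{|z|^2}\,\mathd z\Big],
\]
while~\eqref{eq:BBM} gives $\scrF_{\eps_k}[u_R]\to R^{N-2}\int_{\RR^N}\int_{\Snmu}|\nabla u(x)\cdot\sigma|^2\,\mathd\lambda(\sigma)\,\mathd x$. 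The factors $R^{N-2}$ cancel and~\eqref{eq:nec} follows with $M:=\tfrac2c\int_{\RR^N}\int_{\Snmu}|\nabla u(x)\cdot\sigma|^2\,\mathd\lambda(\sigma)\,\mathd x<\infty$, which does not depend on $R$.

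Part~(i) has two consequences. On the measure side, $\{\nu_k\}$ is locally uniformly bounded and the tails $\int_{B(0,R)^c}|z|^{-2}\,\mathd\nu_k$ are bounded uniformly in $k$, so $\{\nu_k\}$ is precompact for the local weak-$\ast$ convergence of Radon measures, and by lower semicontinuity any subsequential limit $\nu$ still obeys $\int_{B(0,R)^c}|z|^{-2}\,\mathd\nu\le M/R^2$. On the Fourier side, $\sin^2 t\le\min(t^2,1)$ and a splitting of the $z$-integral give $0\le\Phi_k(\xi)\le C(1+|\xi|^2)$ uniformly in $k$, so $\{\Phi_k\}$ is bounded in $L^\infty_{\mathrm{loc}}(\RR^N)$. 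Extracting a weakly-$\ast$ convergent subsequence and using the reformulation above together with the arbitrariness of $g$ forces every limit to equal $\Psi/2$ a.e.; by uniqueness of the limit the whole sequence satisfies $2\Phi_k\rightharpoonup\Psi$ weakly in $L^1_{\mathrm{loc}}$. Testing this against any even, compactly supported $\theta\in L^1(\RR^N)$ and applying Fubini, I get
\[
\int_{\RR^N}K_\theta(z)\,\mathd\nu_k(z)\ \longrightarrow\ \frac12\int_{\RR^N}\Psi\,\theta,\qquad K_\theta(z):=\int_{\RR^N}\frac{\sin^2(z\cdot\xi/2)}{|z|^2}\,\theta(\xi)\,\mathd\xi=\frac{\hat\theta(0)-\hat\theta(z)}{2|z|^2};
\]
since $K_\theta$ is bounded, continuous and $O(|z|^{-2})$ at infinity, the uniform tail bound also gives $\int K_\theta\,\mathd\nu_k\to\int K_\theta\,\mathd\nu$ for every subsequential limit $\nu$.

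The crux --- and the step I expect to be the main obstacle --- is to show that every such $\nu$ is a multiple of $\delta_0$. The idea is to produce, for each $0<\delta<R$, an even $\theta\in C_c^\infty(\RR^N)$ with $K_\theta\ge0$, $K_\theta(0)=0$, $K_\theta>0$ on $\{\delta\le|z|\le R\}$ and $\int_{\RR^N}\Psi\,\theta=0$. Concretely, choose an even $\chi\in C_c^\infty(\RR^N)$ sharply concentrated near $0$, so that $\hat\chi>0$ on $\{|z|\le R\}$, and set $\theta:=-\Delta^2(\chi\ast\chi)\in C_c^\infty(\RR^N)$; then $\hat\theta(z)=-|z|^4\,\hat\chi(z)^2$, hence $K_\theta(z)=\tfrac12|z|^2\,\hat\chi(z)^2\ge0$ with $K_\theta(0)=0$ and $K_\theta>0$ on $\{\delta\le|z|\le R\}$, while $\hat\theta$ vanishes to fourth order at $0$, so $\int_{\RR^N}\Psi\,\theta=\mathrm{tr}(A\,C_\theta)=0$ with $A:=\int_{\Snmu}\sigma\otimes\sigma\,\mathd\lambda$ and $C_\theta:=\int_{\RR^N}\xi\otimes\xi\,\theta(\xi)\,\mathd\xi=-D^2\hat\theta(0)=0$. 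The displayed convergence then forces $\int K_\theta\,\mathd\nu_k\to0$, whence $\nu_k(\{\delta\le|z|\le R\})\to0$ for every $0<\delta<R$; letting $\delta\to0$ and $R\to\infty$ gives $\nu(\RNmz)=0$, i.e.\ $\nu=\alpha'\delta_0$ for some $\alpha'\ge0$. Applying the displayed convergence once more with $\theta$ a nonnegative radial bump (so that $K_\theta(0)>0$) yields $\alpha'\,K_\theta(0)=\tfrac12\int_{\RR^N}\Psi\,\theta$, which identifies $\alpha'=2\lambda(\Snmu)$, a value independent of the chosen subsequence. Hence $\{\nu_k\}$ converges locally weakly-$\ast$ to $\alpha\delta_0$ with $\alpha:=2\lambda(\Snmu)\ge0$, which is~(ii).
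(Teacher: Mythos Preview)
Your proof is correct and takes a genuinely different route from the paper's. For~(i), the paper works entirely on the Fourier side with rescaled \emph{radial} test functions $\psi_R(\xi)=R^{N/2}\psi(R\xi)$, reduces via a rotation to the $e_1$-direction, and then performs an explicit spherical-coordinate computation to extract the two bounds; your argument with $\hat u=\mathbf 1_{\{1/2\le|\xi|\le1\}}$ and the soft lower bound $\|u(\cdot+z)-u\|_{L^2}^2\ge c\min(|z|^2,1)$ (via Taylor expansion near~$0$, Riemann--Lebesgue at infinity, and compactness in between) followed by physical-space rescaling is shorter and avoids that calculation. The more substantial divergence is in~(ii): the paper invokes its separately proved compactness theorem (\autoref{thm:compactness}), which already requires the short/medium/long-range decomposition of \autoref{stm:cpt}, then compares the resulting limit with the BBM formula and uses a homogeneity argument (divide the pointwise identity by $|\xi|^2$ and let $|\xi|\to\infty$) to force the nonlocal piece to vanish. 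Your construction $\theta=-\Delta^2(\chi*\chi)$, giving $\hat\theta(z)=-c|z|^4\hat\chi(z)^2$ and hence $K_\theta(z)=\tfrac{c}{2}|z|^2\hat\chi(z)^2\ge0$ with $\int\Psi\theta=0$, bypasses all of that machinery and kills the annular mass of $\nu_k$ directly. What the paper's route buys is the structural decomposition into the two limiting measures $\mu$ and $\nu$, which is of independent interest; what your route buys is a self-contained proof of \autoref{thm:main} that does not rely on \autoref{thm:compactness}, together with the explicit identification $\alpha=2\lambda(\Snmu)$, a point the paper does not make and which in fact cleanly justifies why the \emph{whole} sequence $\{\nu_k\}$ (rather than merely subsequences) converges.
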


Roughly speaking, condition \ref{i} prescribes that
for $\eps\in J$ small enough
each kernel $\rho_\eps$ must have finite mass in any large ball around the origin, and that, at the same time,
the contributions accounting for long-range interactions must be asymptotically negligible.
Indeed, as we show in \autoref{subs:cond1},
\eqref{eq:nec} is equivalent to the following \emph{uniform decay condition}:
there exists $M\geq 0$ such that for every $R>0$
\begin{gather*}
    \limsup_{k\to +\infty}\int_{\RR^N}\frac{\rho_{\eps_k}(z)}{R^2+|z|^2}\mathd z
    \leq \frac{M}{R^2}.
\end{gather*}

When $R=1$,
the  previous inequality entails that
for $k$ large enough $\rho_{\eps_k}\in L^1_{\rm loc}(\RR^N)$, so that, in particular,
position \eqref{eq:nuk} actually defines a Radon measure on $\RR^N$.
A useful way
to regard the measures $\nu_k$ in \eqref{eq:nuk}
is to think of them as quantities encoding medium-range interactions,
although this is not immediately evident from the definition.
From this point of view,
condition \ref{ii} tells us that, in the limit,
such interactions must vanish outside of the origin.
We will elaborate further on this point in this introduction.

It turns out that
conditions \ref{i} and \ref{ii} are also sufficient
or the BBM formula to hold,
so that, in light of \autoref{thm:main},
they are sharp.
To establish the sufficiency,
we  need the following compactness result,
which is interesting on its own:

\begin{theorem}[Asymptotic behavior of nonlocal energies]\label{thm:compactness}
	For every $\eps\in J$,
	let $\rho_\eps\colon \RR^N \to [0,+\infty)$ be measurable and
	let $\scrF_\eps$ be as in~\eqref{eq:energy}.
    
    Suppose that
    there exists $M\geq0$ with the property that
	for every $R>0$
        \begin{gather}\label{eq:suff}
	    \limsup_{\eps\to 0}
        \left[
        \int_{B(0,R)} \rho_\eps(z)\mathd z
        +
        R^2 \int_{B(0,R)^c} \frac{\rho_\eps( z)}{| z |^2} \mathd z
        \right]
         \leq M.
        \end{gather}
    Then, there exist an infinitesimal sequence $\{\eps_k\}\subset J$ and
    two finite positive Radon measures $\mu$ and $\nu$,
    respectively on $\Snmu$ and $\RR^N$,
    that depend only on $\{\rho_{\eps_k}\}$, and
    such that for every $u\in H^1(\RR^N)$ 
    there holds 
    \begin{equation}\label{eq:cpt}
		\lim_{k\to+\infty} \scrF_{\eps_k}[u]
		=
		\frac{1}{2}\int_{\RR^N}
		\left[
		    \int_{\Snmu} | \nabla u(x) \cdot \sigma |^2 \mathd \mu(\sigma)
		    +
		    \int_{\RR^N\mz}
				\frac{| u (x+z) - u (x) |^2}{| z |^2} \mathd \nu(z)
		\right]
		\mathd x.
	\end{equation}
    Moreover, the right-hand side is finite for every $u\in H^1(\RR^N)$.
 \end{theorem}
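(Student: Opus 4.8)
The strategy is to reduce the asymptotic analysis of $\scrF_\eps$ to the weak-$\ast$ convergence of a family of measures on $\RNmz$, and then to identify the limit by testing against smooth functions. Given $u\in H^1(\RR^N)$, rewrite the double integral in \eqref{eq:energy} via the change of variables $z=y-x$ as
\begin{equation*}
    \scrF_\eps[u]
    =\frac12\int_{\RNmz}\rho_\eps(z)\,\frac{1}{|z|^2}
    \left(\int_{\RR^N}|u(x+z)-u(x)|^2\,\mathd x\right)\mathd z
    =\frac12\int_{\RNmz}\frac{\rho_\eps(z)}{|z|^2}\,\omega_u(z)\,\mathd z,
\end{equation*}
where $\omega_u(z)\coloneqq\|u(\cdot+z)-u(\cdot)\|^2_{L^2(\RR^N)}$. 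The point of the decomposition \eqref{eq:suff} is that it yields a uniform bound on the total masses of the measures
\begin{equation*}
    \mathd\gamma_\eps(z)\coloneqq\frac{\rho_\eps(z)}{1+|z|^2}\,\mathd z
    \quad\text{on }\RR^N,
    \qquad
    \mathd\theta_\eps(z)\coloneqq\frac{\rho_\eps(z)}{|z|^2}\,\min\{|z|^2,1\}\,\mathd z
    \quad\text{on }\RNmz :
\end{equation*}
indeed, testing \eqref{eq:suff} with $R=1$ bounds $\limsup_\eps\gamma_\eps(\RR^N)$, and a dyadic decomposition of $\RR^N$ into $B(0,1)$ and the annuli $\{2^{j}\le|z|<2^{j+1}\}$, $j\ge0$, together with \eqref{eq:suff} applied at $R=2^{j}$, controls $\limsup_\eps\theta_\eps(\RNmz)$. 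By weak-$\ast$ compactness of bounded sequences of Radon measures, I would extract an infinitesimal sequence $\{\eps_k\}$ along which $\theta_{\eps_k}\rightharpoonup^\ast\theta$ locally on $\RR^N$ (hence in particular the restrictions converge on every $\{|z|\ge\delta\}$), and separately handle the behavior near the origin.

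The decisive splitting is between the region near $z=0$ and the region away from it. Fix $\delta>0$; using $|u(x+z)-u(x)|\le|z|\int_0^1|\nabla u(x+tz)|\,\mathd t$ and Jensen, one gets $\omega_u(z)\le|z|^2\|\nabla u\|^2_{L^2}$, so the inner-region contribution $\frac12\int_{B(0,\delta)}\frac{\rho_{\eps_k}(z)}{|z|^2}\omega_u(z)\,\mathd z$ is a bounded functional of $u$ that behaves, as $\delta\to0$, like an anisotropic quadratic form in $\nabla u$. More precisely, writing $z=r\sigma$ and using the (Fourier-side or direct) identity $\int_{\RR^N}|u(x+z)-u(x)|^2\mathd x=\int_{\RR^N}|\widehat u(\xi)|^2\,|e^{i z\cdot\xi}-1|^2\mathd\xi$, the measures $\sigma\mapsto\big(\int_0^\delta\rho_{\eps_k}(r\sigma)\,\mathd r\big)$ on $\Snmu$ are bounded by \eqref{eq:suff} (with $R=\delta$), so along a further subsequence they converge to some finite measure $\mu_\delta$ on $\Snmu$; a diagonal argument in $\delta$, combined with the elementary estimate $\big||e^{iz\cdot\xi}-1|^2-|z\cdot\xi|^2\big|\le C|z|^3|\xi|^3$, produces a single measure $\mu$ on $\Snmu$ such that the inner contribution converges to $\frac12\int_{\RR^N}\int_{\Snmu}|\nabla u(x)\cdot\sigma|^2\mathd\mu(\sigma)\,\mathd x$. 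For the outer region, since $z\mapsto\omega_u(z)/|z|^2=\omega_u(z)/|z|^2$ is bounded and continuous on $\{|z|\ge\delta\}$ and vanishes at infinity (because $\omega_u(z)\le 4\|u\|^2_{L^2}$), the weak-$\ast$ convergence $\theta_{\eps_k}\rightharpoonup^\ast\theta$ gives convergence of $\frac12\int_{B(0,\delta)^c}\frac{\rho_{\eps_k}(z)}{|z|^2}\omega_u(z)\,\mathd z$ to the corresponding integral against $\theta$; defining $\mathd\nu(z)\coloneqq\theta(z)/\min\{|z|^2,1\}\cdot\mathd z$-style, i.e.\ $\mathd\nu\coloneqq|z|^{2}\,\mathd\theta/\min\{|z|^2,1\}$ on $\RNmz$ (a finite measure there), this is exactly $\frac12\int_{\RR^N}\int_{\RNmz}\frac{|u(x+z)-u(x)|^2}{|z|^2}\mathd\nu(z)\,\mathd x$ in the limit $\delta\to0$. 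Finiteness of the right-hand side of \eqref{eq:cpt} for every $u\in H^1$ follows from the same two bounds: the $\mu$-term is $\le\tfrac12\mu(\Snmu)\|\nabla u\|^2_{L^2}$ and the $\nu$-term is $\le 2\|u\|^2_{L^2}\nu(\{|z|\ge1\})+\tfrac12\|\nabla u\|^2_{L^2}\nu(\{0<|z|<1\})$.

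The main obstacle is making the two limiting procedures — the subsequence extraction in $\eps$ and the limit $\delta\to0$ — interact cleanly, so that a \emph{single} sequence $\{\eps_k\}$ and a \emph{single} pair $(\mu,\nu)$ (independent of $u$ and of $\delta$) work simultaneously for all $u\in H^1(\RR^N)$. I would resolve this by first extracting $\{\eps_k\}$ so that the $z$-measures $\frac{\rho_{\eps_k}(z)}{1+|z|^2}\mathd z$ converge locally weakly-$\ast$ on all of $\RR^N$ to a limit $\overline\gamma$; the part of $\overline\gamma$ charging $\RNmz$ furnishes $\nu$, while the atom of $\overline\gamma$ at the origin, analyzed through the angular marginals described above, furnishes $\mu$ (and, incidentally, this is where the constant $\alpha\delta_0$ of \autoref{thm:main}\ref{ii} will later come from: $\mu$ is the "blow-up" of that atom and $\nu$ its "residue" away from $0$). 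Separability of $C_c(\RNmz)$ and of $C(\Snmu)$ lets one fix these limits once and for all; the uniform bounds then make every error term (truncation in $\delta$, the $O(|z|^3)$ Taylor remainder, the tail in $z$) small uniformly, and a standard $3\eps$-argument closes the proof. A secondary technical point is the measurability/continuity of $z\mapsto\omega_u(z)$, which follows from continuity of translations in $L^2$ and is needed to legitimately integrate against the Radon measures.
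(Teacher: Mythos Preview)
Your outline is correct and follows essentially the same strategy as the paper: split the $z$-integral by range, extract the limiting measures by weak-$\ast$ compactness, build angular marginals on $\Snmu$ for the short-range piece, and use a diagonal argument to land on a single subsequence independent of $u$. The paper executes this entirely on the Fourier side (\autoref{lemma:fourier}): it first proves pointwise convergence in $\xi$ of the symbol $I_\eps(\xi;\RR^N)=\int\rho_\eps(z)\frac{1-\cos(z\cdot\xi)}{|z|^2}\,\mathd z$ (\autoref{stm:cpt}), and then integrates against $|\hat u(\xi)|^2$ by dominated convergence, the domination coming from \autoref{stm:growth}; you instead test the $z$-measures directly against $\omega_u$, which is the dual viewpoint and leads to the same decomposition and estimates. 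Three small points to tighten. First, your angular densities should read $\int_0^\delta r^{N-1}\rho_{\eps_k}(r\sigma)\,\mathd r$ (the polar Jacobian is missing). Second, your dyadic argument for bounding $\limsup_\eps\theta_\eps(\RNmz)$ is both unnecessary and, as written, problematic: since \eqref{eq:suff} is only a $\limsup$, the $\eps$-threshold depends on $R$ and you cannot sum over all dyadic scales at once; simply taking $R=1$ in \eqref{eq:suff} already gives $\limsup_\eps\theta_\eps(\RNmz)\le 2M$. Third, in the outer region the correct test function against $\theta_\eps$ is $\omega_u(z)/\min\{|z|^2,1\}$, which does \emph{not} vanish at infinity (indeed $\omega_u(z)\to 2\|u\|_{L^2}^2$ as $|z|\to\infty$), so local weak-$\ast$ convergence alone is not enough; you need the tail estimate $\limsup_\eps\int_{|z|\ge R}\rho_\eps/|z|^2\,\mathd z\le M/R^2$ from \eqref{eq:suff}. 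The paper handles this cleanly by introducing a third region $B(0,\delta^{-1})^c$ and showing its contribution is $O(\delta^2)$, which is exactly the ``tail in $z$'' you allude to in your final paragraph.
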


\autoref{thm:compactness} shows that,
while the integrability and decay conditions in \ref{i} are sufficient to establish the convergence of the functionals in \eqref{eq:energy},
in the absence of condition \ref{ii}
we cannot exclude the persistence of nonlocal terms in the limit.
Indeed, the measure $\nu$ is retrieved as the limit (in the sense of weak-$\ast$ convergence)
of the medium-range interactions encoded by \eqref{eq:nuk}.
The measure $\mu$ captures instead the concentration of the sequence $\{\rho_{\eps_k}\}$ around the origin, and
it characterizes the (possibly zero) local term in the limiting energy.
Loosely speaking, for every Borel subset $E\subseteq \Snmu$, $\mu$ is given by
\begin{equation*}
    \mu(E)\coloneqq \lim_{\delta\to 0}\int_{C_\delta(E)}\rho_{\eps_\delta}(z) \mathd z
\end{equation*}
where $C_\delta(E)$ is the intersection of the cone spanned by $E$ with $B(0,\delta)$, $\{\eps_\delta\}$ is a suitable subfamily, and
the limit is taken in the sense of the weak-$\ast$ convergence of measures.
We refer to Step 3 and 4 in the proof of \autoref{stm:cpt} for the precise definition.
In particular, when the kernels $\rho_{\eps_k}$ are radial (cf.~\cite{BBM01}), then $\mu=c\mathcal{L}^N$ for a constant $c\geq0$.

We conclude our analysis by showing that,
when \ref{ii} is imposed as well,
the limiting nonlocal effects vanish.
\BBB

\begin{corollary}[Sharp sufficient conditions for the BBM formula]\label{cor:suff}
    Let us suppose that
    same hypotheses of \autoref{thm:compactness} hold,
    and let us suppose also that
    the family $\{\nu_\eps\}_{\eps\in J}$ of Radon measures on $\RR^N$ defined by
    \begin{equation}\label{eq:nueps}
	 	\langle \nu_\eps , f \rangle
	 		\coloneqq  \int_{\RR^N} \rho_{\eps}(z) f(z) \mathd z
	 	\qquad\text{for all } f\in C_c(\RR^N).
    \end{equation}
	locally weakly-$\ast$ converges in the sense of Radon measures
    to $\alpha \delta_0$,
    where $\alpha\geq 0$ is a positive constant,
    and $\delta_0$ is the Dirac delta in $0$.
    Then, there exist an infinitesimal sequence $\{\eps_k\}\subset J$ and
    a finite positive Radon measure $\mu$ on $\Snmu$ such that the Bourgain-Brezis-Mironescu formula holds,
	that is,
	\begin{equation}
 \label{eq:BBM-limit}
		\lim_{k\to+\infty} \scrF_{\eps_k}[u]
		=
		\frac{1}{2} \int_{\RR^N}\int_{\Snmu}
		| \nabla u(x) \cdot \sigma |^2
		\mathd \mu(\sigma) \mathd x.
	\end{equation}
\end{corollary}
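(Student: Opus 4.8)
The plan is to read the corollary off \autoref{thm:compactness} after checking that the additional hypothesis annihilates the nonlocal term in \eqref{eq:cpt}. Since \eqref{eq:suff} is assumed, \autoref{thm:compactness} provides an infinitesimal sequence $\{\eps_k\}\subset J$ and finite positive Radon measures $\mu$ on $\Snmu$ and $\nu$ on $\RR^N$, depending only on $\{\rho_{\eps_k}\}$, such that \eqref{eq:cpt} holds for every $u\in H^1(\RR^N)$. This $\mu$ will be the one in \eqref{eq:BBM-limit}, so it is enough to prove that the restriction of $\nu$ to $\RNmz$ is the null measure: the inner bracket in \eqref{eq:cpt} then collapses to $\int_{\Snmu}|\nabla u(x)\cdot\sigma|^2\mathd\mu(\sigma)$, which gives \eqref{eq:BBM-limit}, whose right-hand side is finite by \autoref{thm:compactness}.

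To establish the vanishing, I would first record that the sequence $\{\nu_{\eps_k}\}$ obtained from \eqref{eq:nueps} along the $\eps_k$ selected above is a subsequence of the family $\{\nu_\eps\}_{\eps\in J}$, hence it too converges locally weakly-$\ast$ to $\alpha\delta_0$. Next I would invoke the way $\nu$ is constructed in the proof of \autoref{thm:compactness}: there, along the chosen subsequence, $\nu$ is produced as a weak-$\ast$ limit of the medium-range measures $\nu_{\eps_k}$ away from the origin, i.e.\ for every $\delta>0$ the restriction of $\nu$ to $\{|z|>\delta\}$ is the weak-$\ast$ limit, as Radon measures on that open set, of the restrictions of the $\nu_{\eps_k}$ to $\{|z|>\delta\}$, the concentrating part of the sequence being instead encoded by $\mu$ through the cone construction. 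Testing $\{\nu_{\eps_k}\}$ against an arbitrary $\phi\in C_c(\RR^N)$ with $0\notin\operatorname{supp}\phi$ gives $\langle\nu_{\eps_k},\phi\rangle\to\alpha\phi(0)=0$, so by uniqueness of weak-$\ast$ limits $\langle\nu,\phi\rangle=0$ for every such $\phi$; by inner regularity of the finite measure $\nu$ this forces $\nu(\RNmz)=0$.

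The main obstacle is precisely the identification of $\nu$ with the weak-$\ast$ limit of the $\nu_{\eps_k}$ outside the origin, that is, ruling out a loss of mass in the annuli $\{|z|>\delta\}$ as $k\to+\infty$. Carrying this out amounts to revisiting the truncation step in the proof of \autoref{thm:compactness}: one decomposes $\scrF_{\eps_k}[u]$ into the contribution over $\{|z|\le\delta\}$, which upon passing to the limit and then letting $\delta\to0$ builds the $\mu$-term, and the contribution over $\{|z|>\delta\}$, to which one applies the weak-$\ast$ convergence of $\nu_{\eps_k}$ restricted to $\{|z|>\delta\}$ tested against the bounded continuous map $z\mapsto |z|^{-2}\|u(\cdot+z)-u\|^2_{L^2(\RR^N)}$ — which is controlled by $\|\nabla u\|^2_{L^2(\RR^N)}$ and, for $|z|$ large, by $4|z|^{-2}\|u\|^2_{L^2(\RR^N)}$ — the far tail being absorbed via the uniform decay $R^2\int_{B(0,R)^c}|z|^{-2}\rho_{\eps_k}(z)\mathd z\le M+o(1)$ furnished by \eqref{eq:suff}. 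Once this identification is in place the corollary is immediate, and no subsequence extraction beyond the one of \autoref{thm:compactness} is needed.
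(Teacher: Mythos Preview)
Your approach is correct and is essentially the paper's: invoke \autoref{thm:compactness}, then observe that the extra hypothesis forces the nonlocal $\nu$-term in \eqref{eq:cpt} to vanish. However, the ``main obstacle'' you flag in the last paragraph is illusory. In the proof of \autoref{thm:compactness} (more precisely, Step~2 of \autoref{stm:cpt}), the measure $\nu$ is constructed directly as the local weak-$\ast$ limit of $\{\nu_{\eps_k}\}$ on all of $\RR^N$, not merely on annuli away from the origin; the concentration at the origin is captured \emph{in addition} by $\mu$, not instead of by $\nu$. Hence, once $\nu_\eps\stackrel{\ast}{\rightharpoonup}\alpha\delta_0$ is assumed for the whole family, uniqueness of the weak-$\ast$ limit gives $\nu=\alpha\delta_0$ outright, and the integral over $\RNmz$ in \eqref{eq:cpt} vanishes with no truncation or tail argument needed. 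This is precisely the paper's two-line proof; your inner-regularity detour and the revisiting of the annular decomposition are unnecessary.
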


 We refer to Remark \ref{rk:quad} below for an alternative formulation of the right-hand side of \eqref{eq:BBM-limit} in terms of the action of a quadratic form.

Our approach grounds on the use of the Fourier transform,
which allows recasting the family of nonlocal functionals in \eqref{eq:energy}
into double integrals of the form
\begin{equation}\label{eq:energia-F-intro}
    \int_{\RR^N} | \psi (\xi) |^2
		\int_{\RR^N} \rho_\eps( z)\frac{1 - \cos( z\cdot \xi )}{| z |^2}
	\mathd z \mathd \xi,
\end{equation}
with $\psi$ in a suitable weighted $L^2$ space
(see \eqref{eq:L2w} and \eqref{eq:energia-F}).
The technical preliminaries
about the Fourier transform and those on Radon measures to be used later in this work
are collected in \autoref{sec:pre}.
In particular,
the functionals in \eqref{eq:energia-F-intro} and an equivalent formulations of the BBM formula in Fourier variables are retrieved in \autoref{lemma:fourier}.

From \autoref{sec:cpt} we turn to the proof of our results.
First, we establish \autoref{thm:compactness}
by observing that
the condition in \eqref{eq:suff}
grants not only that
the integrals with respect to $z$ in \eqref{eq:energia-F-intro},
as function of~$\xi$,
grow at most as $1+|\xi|^2$
(see~\autoref{stm:growth}),
but also that they converge pointwise
to the Fourier transform of the integrals
within the square brackets in \eqref{eq:cpt}
(see~\autoref{stm:cpt}).
The dominated convergence theorem then applies,
and \eqref{eq:cpt} is retrieved.

The pointwise convergence of the nonlocal energies
provided by \autoref{stm:cpt} plays a central role in our analysis.
It is obtained by studying separately the behaviors
of the family $\{\rho_\eps\}$ at three distinct interaction ranges,
respectively short, medium and long,
that we encode by means of an additional parameter $\delta\in J$.
Short-range interactions arise from the contributions
of shrinking balls of radius $\delta$ centered in the origin, and,
as $\delta \to 0$,
they asymptotically approach the gradient term in \eqref{eq:cpt}.
Medium-range interactions originate from the contributions to the energy
stored in annuli that lie at a distance $\delta$ from the origin.
In the limit, their presence leads to the nonlocal term in \eqref{eq:cpt}, that is,
the integral with respect to the measure $\nu$.
Finally,
long-range interactions occur outside of balls of radius $\delta^{-1}$ centered in the origin,
and their contributions is negligible when $\delta\to 0$.

The proofs of our two other results are provided in \autoref{sec:suffnec}.
With \autoref{thm:compactness} on hand,
\autoref{cor:suff}, that is,
the sufficiency
of conditions \ref{i} and \ref{ii} in \autoref{thm:main}
for the BBM formula, 
follows quickly:
it is enough to observe that
\ref{ii} forces the integral with respect to $\nu$ in \eqref{eq:cpt} to vanish.
In this sense, \ref{ii} may be regarded
as a {\em locality condition},
since it requires that
in the limit the kernels concentrate in the origin.
Conditions of this sort appear to be natural
as far as sufficient criteria
for the convergence of the nonlocal energies to variants of the Dirichlet norm
are sought after
(cf.,~e.g.,~\eqref{eq:concentr} in \autoref{thm:ponce} below or \cite[Thm. 3.1]{AAB22}).
The key novelty of our contribution is that 
we prove item \ref{ii} in \autoref{thm:main}
to be the weakest locality requirement
for the BBM formula \eqref{eq:BBM} to hold.

Proving \autoref{thm:main}, that is, the necessity of \ref{i} and \ref{ii} for the validity of the BBM formula,
is a more delicate issue.
The key step is established in \autoref{stm:concentration}, where,
by a suitable scaling of the functions in \eqref{eq:energia-F-intro}
(see~\autoref{rmk:poincare}),
it is proved that \eqref{eq:ponce} implies \ref{i}.
The weak-$\ast$ convergence of the sequence $\{\nu_k\}$ in \ref{ii} to a multiple of the Dirac delta in $0$ follows then from a homogeneity argument.
We conclude our contribution in \autoref{sec:final}
by clarifying how it compares with the existing literature and
by pointing out possible future research directions.

As we briefly outlined above,
there have been intense research efforts in the asymptotic analysis of nonlocal energies of the form \eqref{eq:energy}.
It is to be noted that
such functionals also arise in applications,
a case of interest being represented, for instance, by nonlocal models in micromagnetics.
Indeed, as pointed out in \cite{Rogers},
if the classical symmetric exchange energy
given by the Dirichlet integral of the magnetization
is replaced by a nonlocal Heisenberg functional of the form \eqref{eq:energy},
then a model closer to atomistic theories is obtained, and, in addition,
the class of admissible magnetizations may be enlarged
to include discontinuous and even `measure-valued' fields. 
This observation is crucial in nonconvex problems such as those of ferromagnetism,
in which the highly oscillatory `domain structures' observed in ferromagnetic materials cannot be captured by magnetizations with Sobolev regularity.
In such nonlocal micromagnetics models, knowing what classes of kernels $\rho_\eps$ lead to an approximation of the classical Dirichlet energies amounts to a selection criterion to establish whether nonlocal descriptions can be replaced by local ones or, instead, such approximations are not mathematically correct. We refer to \cite{DFS22} for further discussion on this topic.


\section{Preliminaries}\label{sec:pre}
After fixing the notation,
in this section we provide a concise overview of some facts
from the theories of the Fourier transform and of Radon measures,
which will serve as the main tools for our study.
In particular,
in \autoref{lemma:fourier}
we derive an equivalent form of the BBM formula \eqref{eq:BBM}
to be employed as the cornerstone of our analysis.

For $N\in \NN\setminus \{0\}$,
we work in the $N$-dimensional Euclidean space $\RR^N$,
endowed with the corresponding inner product $\,\cdot\,$ and norm $|\;|$.
We let $\{e_1,\dots,e_N\}$ be its canonical basis.
For all $z\in \RNmz$ we define $\hat z \coloneqq z / | z |$.
We denote by $\LN$ and $\Hnmu$
the $N$-dimensional Lebesgue and
the $(N-1)$-dimensional Hausdorff measures, respectively.
We let $B(x,r)$ be the open ball in $\RR^N$ of center $x$ and radius~$r$.
We write $B(x,r)^c$ for the complement of $B(x,r)$, while 
the topological boundary of $B(0,1)$ is denoted by $\Snmu$.

\subsection{Fourier transform}
In this paper, we resort
to results on the Fourier transform that are standard and
can be found in any textbook on Fourier analysis
(see, e.g.,~\cite{Stein}).
Here we briefly recall the properties to be used below.

We will employ the unitary Fourier transform
expressed in terms of angular frequency, that is,
for any rapidly decaying $u\in C^\infty(\RR^N)$ and $\xi \in \RR^N$
\[
	\mathcal{F}u(\xi)\coloneqq
		\frac{1}{(2\pi)^{N/2}} \int_{\RR^N} e^{-{\rm i} x\cdot \xi} u(x)\mathd x.
\]
As customary, we will adopt $\hat{u}$ as a shorthand for $\mathcal{F}u$.
We recall that the following identities hold:
\begin{gather*}
	\hat{\tau_z u}(\xi)=e^{-{\rm i}z\cdot \xi} \hat{u}(\xi),
	\qquad
	\hat{\partial_\alpha u}(\xi)=({\rm i} \xi )^\alpha \hat{u}(\xi),
\end{gather*}
where $( \tau_z u ) (x) \coloneqq u(x - z)$, for $x,z,\xi\in \RR^N$,
and where $\alpha\in\NN^N$ is a multi-index.
In particular, we observe that, by the Parseval identity,
the Fourier transform is a bijection between
\[
	H^1(\RR^N)\coloneqq \left\{ u\in L^2(\RR^N) : \text{the distribution $\nabla u$ is in } L^2(\RR^N)\right\}
\]
and the weighted space
\begin{equation}\label{eq:L2w}
    L^2_w(\RR^N) \coloneqq \left\{ \psi\in L^2(\RR^N) :
		\int_{\RR^N} |\xi|^2|\psi(\xi)|^2\mathd \xi <+\infty \right\}.
\end{equation}

By applying Fourier techniques to the functionals in \eqref{eq:BBM},
the following is readily obtained.

\begin{lemma}\label{lemma:fourier}
    Let $\lambda$ be a positive Radon measure on $\Snmu$.
    For every $u \in H^1 \left( \RR^N \right)$ we define
    \begin{equation}\label{eq:limit-func}
        \scrF[u]\coloneqq \int_{\RR^N} \int_{\Snmu} | \nabla u(x) \cdot \sigma |^2 \mathd \lambda(\sigma) \mathd x,
    \end{equation}
    while for every $\psi \in L^2_w(\RR^N)$ we set
    \begin{gather}
	\hat\scrF_\eps[\psi]\coloneqq \int_{\RR^N} | \psi (\xi) |^2
							\int_{\RR^N} \rho_\eps( z)\frac{1 - \cos( z\cdot \xi )}{| z |^2}
						\mathd z \mathd \xi, 
	\label{eq:energia-F}\\
	\hat\scrF[\psi]\coloneqq \int_{\RR^N} |\psi (\xi) |^2
							\int_{\Snmu} | \xi \cdot \sigma |^2 \mathd \lambda(\sigma) 
							\mathd \xi.
	\label{eq:limit-F}
    \end{gather}
    Then, recalling \eqref{eq:energy},
    for every $u \in H^1 \left( \RR^N \right)$ it holds
    \[
        \scrF_\eps[u]=\hat\scrF_\eps[\hat u],
        \qquad
        \scrF[u]=\hat\scrF[\hat u],
    \]
	and, in particular,
    there exist	an infinitesimal sequence $\{\eps_k\}\subset J$ such that \eqref{eq:BBM} holds
    for every $u\in H^1(\RR^N)$
    if and only if for every $\psi \in L^2_w(\RR^N)$
    \begin{equation}\label{eq:nonloc-to-loc-F}
       \lim_{k\to+\infty}\hat\scrF_{\eps_k}[\psi]=\hat\scrF[\psi].
    \end{equation}
\end{lemma}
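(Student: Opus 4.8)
The plan is to establish the two pointwise identities $\scrF_\eps[u]=\hat\scrF_\eps[\hat u]$ and $\scrF[u]=\hat\scrF[\hat u]$ by direct computations resting on Plancherel's theorem, and then to read off the equivalence between \eqref{eq:BBM} and \eqref{eq:nonloc-to-loc-F} from the fact that $\mathcal{F}$ maps $H^1(\RR^N)$ bijectively onto $L^2_w(\RR^N)$, recalled just above the statement.

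For the first identity, I would fix $u\in H^1(\RR^N)$ and, using that the integrand in \eqref{eq:energy} is nonnegative, apply Tonelli's theorem to perform the change of variables $z=y-x$ and integrate first in $x$, obtaining
\[
\scrF_\eps[u]=\frac12\int_{\RR^N}\frac{\rho_\eps(z)}{|z|^2}\left(\int_{\RR^N}|u(x+z)-u(x)|^2\mathd x\right)\mathd z
\]
as an identity in $[0,+\infty]$. The inner integral equals $\|\tau_{-z}u-u\|_{L^2(\RR^N)}^2$, so by Plancherel's theorem and $\hat{\tau_{-z}u}(\xi)=e^{{\rm i}z\cdot\xi}\hat u(\xi)$ it equals $\int_{\RR^N}|e^{{\rm i}z\cdot\xi}-1|^2|\hat u(\xi)|^2\mathd\xi$. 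Inserting the identity $|e^{{\rm i}z\cdot\xi}-1|^2=2(1-\cos(z\cdot\xi))$ and swapping the $z$ and $\xi$ integrations by Tonelli once more yields $\scrF_\eps[u]=\hat\scrF_\eps[\hat u]$, cf. \eqref{eq:energia-F}.

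For the second identity, since $\lambda$ is a Radon measure on the compact sphere $\Snmu$ it is finite, hence Tonelli gives $\scrF[u]=\int_{\Snmu}\left(\int_{\RR^N}|\nabla u(x)\cdot\sigma|^2\mathd x\right)\mathd\lambda(\sigma)$. For each $\sigma\in\Snmu$ the directional derivative $\nabla u\cdot\sigma$ lies in $L^2(\RR^N)$ with $\widehat{\nabla u\cdot\sigma}(\xi)={\rm i}(\xi\cdot\sigma)\hat u(\xi)$, so Plancherel's theorem turns the inner integral into $\int_{\RR^N}|\xi\cdot\sigma|^2|\hat u(\xi)|^2\mathd\xi$; exchanging the order of integration once more gives $\scrF[u]=\hat\scrF[\hat u]$, cf. \eqref{eq:limit-F}. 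The equivalence is then immediate: if \eqref{eq:BBM} holds along $\{\eps_k\}$ for every $u\in H^1(\RR^N)$, then given $\psi\in L^2_w(\RR^N)$ I set $u\coloneqq\mathcal{F}^{-1}\psi$ and conclude $\hat\scrF_{\eps_k}[\psi]=\scrF_{\eps_k}[u]\to\scrF[u]=\hat\scrF[\psi]$; conversely, given $u\in H^1(\RR^N)$, taking $\psi\coloneqq\hat u\in L^2_w(\RR^N)$ gives $\scrF_{\eps_k}[u]=\hat\scrF_{\eps_k}[\psi]\to\hat\scrF[\psi]=\scrF[u]$.

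I do not expect a genuine obstacle here: the statement is essentially a bookkeeping exercise in Fourier analysis. The only points deserving some care are that all the identities above are to be read as equalities in $[0,+\infty]$ — so that Tonelli's theorem applies with no integrability hypothesis and no density argument is needed — and that the Fourier rules for translations and for first-order distributional derivatives are used in their $L^2$ form, which is legitimate precisely because $u\in H^1(\RR^N)$ entails $\nabla u\in L^2(\RR^N)$, equivalently $\hat u\in L^2_w(\RR^N)$.
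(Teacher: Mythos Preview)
Your proposal is correct and follows essentially the same route as the paper's proof: change of variables $z=y-x$, Parseval/Plancherel applied to $\tau_{-z}u-u$ together with the identity $|1-e^{{\rm i}z\cdot\xi}|^2=2(1-\cos(z\cdot\xi))$ for $\scrF_\eps$, and Parseval applied to $\nabla u\cdot\sigma$ for $\scrF$, concluding via the bijectivity of $\mathcal{F}$ between $H^1(\RR^N)$ and $L^2_w(\RR^N)$. Your explicit invocation of Tonelli's theorem to justify the interchanges of integration order is a welcome bit of extra care that the paper leaves implicit.
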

\begin{proof}
	Recall that $( \tau_z u ) (x) \coloneqq u(x - z)$ for every $x,z\in \RR^N$.
	By the change of variables $z \coloneqq y - x$ and the Parseval identity
	we obtain
	\begin{align*}
	\scrF_\eps [ u ]
		& = \frac{1}{2} \int_{\RR^N \times \RR^N}
				\frac{\rho_\eps(z)}{| z |^2} | u (x + z) - u (x) |^2  
			\mathd z \mathd x \\
		& = \frac{1}{2} \int_{\RR^N}
				\frac{\rho_\eps(z)}{| z |^2}
				\int_{\RR^N} | \tau_{- z} u (x) - u (x) |^2
			\mathd x \mathd z \\
		& = \frac{1}{2}	\int_{\RR^N}
				\frac{\rho_\eps( z)}{| z |^2}
				\int_{\RR^N} | \mathcal{F} [ u - \tau_{- z}u ] (\xi) |^2
			\mathd \xi \mathd z.
	\end{align*}
	The properties of the Fourier transform yield
	\[
		| \mathcal{F} [ u - \tau_{- z}u ] (\xi) |^2
		= | 1 - e^{{\rm i}  z\cdot \xi} |^2 | \widehat{u} (\xi) |^2
		= 2 \big( 1 - \cos( z\cdot \xi ) \big) | \widehat{u} (\xi) |^2,
	\]
	whence we infer $\scrF_\eps[u]=\hat\scrF_\eps[\hat u]$.
    Similarly, we have
	\begin{align*}
		\scrF[u]
		& = \int_{\Snmu} \int_{\RR^N} | \mathcal{F}[\nabla u \cdot \sigma](\xi) |^2 \mathd \xi \mathd \lambda(\sigma) \\
		& = \int_{\RR^N} |\widehat{u} (\xi) |^2 \int_{\Snmu} | \xi \cdot \sigma |^2 \mathd \lambda(\sigma)  \mathd \xi \\
        & = \hat\scrF[\hat u].
	\end{align*}
	We then achieve the conclusion
    thanks to the one-to-one correspondence
	between $H^1(\RR^N)$ and $L^2_w(\RR^N)$
	provided by the Fourier transform.
\end{proof}

\subsection{Positive Radon measures on $\RR^N$}\label{sec:radon}
We recall here some definitions and properties
that may be found, e.g., in \cite[Secs. 1.3 and 1.4]{AFP00};
we refer to such monograph for a more detailed study of (geometric) measure theory.
 
Let $X \subseteq \RR^N$ be a set.
A positive measure $\mu$
on the $\sigma$-algebra of Borel sets in $X$
is a positive Radon measure
if it is finite on compact sets;
if it holds as well that $\mu(X)<+\infty$,
we say that $\mu$ is a finite positive Radon measure.
We denote the space of positive Radon measures on $X$ by $\scrM_{\rm loc}(X)$
and the one of finite positive Radon measures by $\scrM(X)$.

The Riesz representation theorem proves that
$\scrM_{\rm loc}(X)$ may be identified as the dual
of the space of compactly supported continuous functions $C_c(X)$
endowed with local uniform convergence.
Accordingly, we say that
a sequence $\{\mu_k\}\subset\scrM_{\rm loc}(X)$ converges
to $\mu\in \scrM_{\rm loc}(X)$ in the local weak-$\ast$ sense,
and we write $\mu_k\stackrel{\ast}{\rightharpoonup} \mu$ in $\scrM_{\rm loc}(X)$,
if
\begin{equation}
\label{eq:cc}
	\lim_{k\to+\infty} \int_{X} f(x)\mathd\mu_k(x) =
	\int_{X} f(x)\mathd\mu(x)
	\qquad\text{for every } f\in C_c(X).
\end{equation}
In wider generality,
if $\mu_k\stackrel{\ast}{\rightharpoonup} \mu$ in $\scrM_{\rm loc}(X)$,
then the previous equality holds
for every bounded Borel function $f\colon X \to \RR$ with compact support
such that the set of its discontinuity points is $\mu$-negligible.
In particular, if $X$ is compact and $\mu_k\stackrel{\ast}{\rightharpoonup} \mu$ in $\scrM_{\rm loc}(X)$, then \eqref{eq:cc} holds for every $f\in C(X)$.

A uniform control on the mass of each compact set
along a sequence of Radon measure is sufficient to ensure
local weak-$\ast$ precompactness:
if $\{\mu_k\}$ is a sequence of positive Radon measures
such that $\sup_k\{ \mu_k(C) : C\subset X\} <+\infty$
for every compact set $C \subset X$,
then there exists a locally weakly-$\ast$ converging subsequence.

\section{Proof of \autoref{thm:compactness}}\label{sec:cpt}

We devote this section to proving that
the summability and decay conditions in \eqref{eq:suff} are sufficient
to yield convergence of a subsequence of $\{\scrF_\eps\}$.
In particular, we are able to characterize the limiting functional,
as \eqref{eq:cpt} shows.

As a first step, by assuming that
the kernels $\rho_{\eps}$ satisfy \eqref{eq:suff}
(actually, it suffices that the bound holds
just for one $R>0$),
we deduce that
the energies $\hat{\scrF}_\eps$ in \eqref{eq:energia-F} are finite for every $\psi\in L^2_w(\RR^N)$,
provided $\eps$ is small enough.
This is an immediate consequence of the next lemma, 
which, in spite of its simplicity, will prove to be useful.
\begin{lemma}\label{stm:growth}
    For every $\eps\in J$,
	let $\rho_\eps\colon \RR^N \to [0,+\infty)$ be measurable, and
    let us suppose that
    \eqref{eq:suff} holds for $R=1$.
    Then, for every $\xi\in \RR^N$
    \begin{gather*}
        \limsup_{\eps \to 0}
        \int_{B(0,1)}
            \rho_\eps( z)\frac{ 1 - \cos(z\cdot \xi )}{| z |^2}
        \mathd z
        \leq \frac{M}{2} |\xi|^2, \\
        \limsup_{\eps \to 0}
        \int_{B(0,1)^c}
            \rho_\eps( z)\frac{ 1 - \cos(z\cdot \xi )}{| z |^2}
        \mathd z
        \leq 2M,
	\end{gather*}
    where $M\geq 0$ is as in \eqref{eq:suff}.
\end{lemma}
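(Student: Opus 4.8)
The plan is to control each integrand by a multiple of $\rho_\eps(z)$ and of $\rho_\eps(z)/|z|^2$, respectively, and then to pass to the $\limsup$ invoking \eqref{eq:suff} with $R=1$. Since the two summands on the left-hand side of \eqref{eq:suff} are both nonnegative, that bound splits into
\begin{equation*}
    \limsup_{\eps\to 0}\int_{B(0,1)}\rho_\eps(z)\,\mathd z \leq M
    \qquad\text{and}\qquad
    \limsup_{\eps\to 0}\int_{B(0,1)^c}\frac{\rho_\eps(z)}{|z|^2}\,\mathd z \leq M,
\end{equation*}
which is what I would use below.

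For the first estimate I would exploit the elementary inequality $1-\cos t\leq t^2/2$, valid for every $t\in\RR$, together with the Cauchy--Schwarz bound $(z\cdot\xi)^2\leq|z|^2|\xi|^2$; combined, these give
\begin{equation*}
    0\leq \frac{1-\cos(z\cdot\xi)}{|z|^2}\leq \frac{(z\cdot\xi)^2}{2|z|^2}\leq \frac{|\xi|^2}{2}
    \qquad\text{for every }z\in\RNmz,\ \xi\in\RR^N.
\end{equation*}
Multiplying by $\rho_\eps\geq 0$, integrating over $B(0,1)$, and letting $\eps\to 0$ then yields the first claimed inequality.

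For the second estimate I would instead use the trivial bound $1-\cos t\leq 2$, so that on $B(0,1)^c$ one has $\rho_\eps(z)\,\bigl(1-\cos(z\cdot\xi)\bigr)/|z|^2\leq 2\,\rho_\eps(z)/|z|^2$; integrating over $B(0,1)^c$ and taking the $\limsup$ gives the bound $2M$. There is no serious obstacle here: the only points worth flagging are the use of the nonnegativity of $\rho_\eps$ to decouple the combined $\limsup$ in \eqref{eq:suff}, and the (standard) monotonicity of $\limsup$, which is what legitimizes passing it through the pointwise bounds.
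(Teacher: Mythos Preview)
Your proposal is correct and follows essentially the same argument as the paper: split the $\limsup$ in \eqref{eq:suff} into its two nonnegative summands, use $1-\cos t\leq t^2/2$ (the paper derives this via $\sin t\leq t$ and an integral representation, but it is the same inequality) to handle the ball, and use $1-\cos t\leq 2$ for the complement. No gaps.
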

\begin{proof}
    From \eqref{eq:suff} with $R=1$,
    it follows
    \begin{gather}\label{eq:levy}
    \limsup_{\eps \to 0}
        \int_{B(0,1)} \rho_\eps(z)\mathd z
    \leq M,
    \qquad
    \limsup_{\eps \to 0}
        \int_{B(0,1)^c} \frac{\rho_\eps( z)}{| z |^2} \mathd z
    \leq M
    \end{gather}
    
    We first focus on contributions in $B(0,1)$.
	Since $\sin (t) \leq t$ for $t \geq 0$, we have
	\begin{align}\label{eq:bound-cos}
	\frac{1 - \cos ( z \cdot \xi)}{| z |^2}
				= \frac{1}{|z|^2}\int_0^{| z \cdot \xi|} \sin(t) \mathd t
				\leq \frac{1}{2} (\hat z\cdot \xi)^2,
	\end{align}
	where $\hat{z}\coloneqq z/|z|$.
    By taking into account the first inequality in \eqref{eq:levy},
    we deduce
	\begin{equation*}
	    \limsup_{\eps \to 0}
		\int_{B(0,1)} \rho_\eps( z) \frac{1 - \cos(z\cdot \xi )}{| z |^2} \mathd z
		\leq \frac{| \xi |^2}{2}
		\limsup_{\eps \to 0}
		\int_{B(0,1)} \rho_\eps( z)  \mathd z
		\leq \frac{M}{2} | \xi |^2.
	\end{equation*}
    Instead, far from the origin we have
	\begin{equation*}
		\limsup_{\eps \to 0}
		\int_{B(0,1)^c} \rho_\eps( z) \frac{1 - \cos(z\cdot \xi )}{| z |^2} \mathd z
		\leq 2 \limsup_{\eps \to 0}
		\int_{B(0,1)^c} \frac{\rho_\eps( z)}{| z |^2} \mathd z
		\leq 2M,
	\end{equation*}
	where we used the second estimate in \eqref{eq:levy}.	
\end{proof}

For the second step
towards the proof of \autoref{thm:compactness},
it is convenient to introduce the following notation:
for every $\xi\in\RR^N$ and $\eps\in J$, we let
\begin{equation}\label{eq:Ieps}
	I_\eps(\xi;A)\coloneqq 
		\int_{A} \rho_\eps(z)\frac{1-\cos( z\cdot \xi)}{| z|^2}
	\mathd z.
	\qquad\text{for all $\LN$-measurable } A\subseteq \RR^N.
\end{equation}
By \autoref{stm:growth},
we know that, under condition \eqref{eq:suff}, the functional $I_\eps(\xi;\RR^N)$ grows at most as $1+|\xi|^2$.
Then,
recalling the formulation of the BBM formula in Fourier variables provided by \autoref{lemma:fourier},
in order to show that \eqref{eq:cpt} holds,
it suffices to characterize the pointwise limit of
the family of integrals with respect to $z$ in \eqref{eq:energia-F},
when regarded as functions of $\xi$, that is, of
$\{I_\eps(\,\cdot\,;\RR^N)\}$.
The next proposition takes care of this.

Note that in order to achieve the task
that we have just outlined
it is natural to regard $\{\rho_\eps\}$ as a family of Radon measures and
to take the limit of $\{I_\eps(\,\cdot\,;\RR^N)\}$
by appealing to some weak-$\ast$ compactness argument.
Even though such compactness is actually available
(see Step 2 in the proof of \autoref{stm:cpt}),
the discontinuity of the function $z \mapsto (1-\cos(\xi\cdot z))/|z|^2$ prevents the results recalled in \autoref{sec:radon}
from being immediately viable.
To circumvent such an obstacle,
in the proof of \autoref{stm:cpt}
we introduce an auxiliary parameter $\delta\in J$
to quantify the range of interactions
(respectively short, medium or long), and
we accordingly define two families of measures,
which are meant to encode the limiting behavior of $\{\rho_\eps\}$ at different scales.

\begin{proposition}\label{stm:cpt}
	If \eqref{eq:suff} holds,
	then there exist an infinitesimal sequence $\{\eps_k\}\subset J$ and
    two finite Radon measures $\mu\in \scrM(\Snmu)$ and $\nu\in\scrM(\RR^N)$
    that depend only on $\{\rho_{\eps_k}\}$
    and such that for every $\xi\in \RR^N$
	\begin{equation*}
		\lim_{k\to +\infty} I_{\eps_k}(\xi;\RR^N)=
		\frac{1}{2}
			\int_{\Snmu} |\xi \cdot \sigma|^2\mathd\mu(\sigma)
			+
			 \int_{\RNmz}
				\frac{1-\cos( z\cdot \xi)}{|z|^2}
			\mathd\nu(z).
	\end{equation*}
\end{proposition}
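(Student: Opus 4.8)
The plan is to split, for an auxiliary parameter $\delta\in J$, the integral according to three interaction ranges,
\[
    I_\eps(\xi;\RR^N)=I_\eps(\xi;B(0,\delta))+I_\eps(\xi;A_\delta)+I_\eps(\xi;B(0,\delta^{-1})^c),
    \qquad A_\delta:=\{\delta\le|z|\le\delta^{-1}\},
\]
to pass to the limit in $k$ with $\delta$ fixed, and only afterwards let $\delta\to0$. Long-range contributions are negligible: since $(1-\cos(z\cdot\xi))/|z|^2\le 2/|z|^2$, inequality \eqref{eq:suff} with $R=\delta^{-1}$ yields $\limsup_{\eps\to0}I_\eps(\xi;B(0,\delta^{-1})^c)\le 2M\delta^2$, uniformly in $\xi$. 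For short-range contributions I would use the elementary estimate $\bigl|(1-\cos(z\cdot\xi))/|z|^2-\tfrac12(\hat z\cdot\xi)^2\bigr|\le\tfrac1{24}|z|^2|\xi|^4$ (a consequence of $s-\tfrac16 s^3\le\sin s$ for $s\ge0$): letting $\mu_\eps^\delta\in\scrM(\Snmu)$ be the push-forward of $\rho_\eps\LN\restriction B(0,\delta)$ under $z\mapsto\hat z$, this gives $I_\eps(\xi;B(0,\delta))=\tfrac12\int_{\Snmu}|\sigma\cdot\xi|^2\,\mathd\mu_\eps^\delta(\sigma)+E_\eps^\delta(\xi)$ with $\limsup_{\eps\to0}|E_\eps^\delta(\xi)|\le\tfrac1{24}\delta^2|\xi|^4 M$, because $\limsup_{\eps\to0}\int_{B(0,\delta)}\rho_\eps\le M$ by \eqref{eq:suff} with $R=\delta$. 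Finally, setting $\nu_\eps^\delta:=\rho_\eps\LN\restriction A_\delta\in\scrM(\RR^N)$, the medium-range term reads $I_\eps(\xi;A_\delta)=\int_{\RR^N}\varphi_\xi\,\mathd\nu_\eps^\delta$ with $\varphi_\xi(z):=(1-\cos(z\cdot\xi))/|z|^2$; this $\varphi_\xi$ is nonnegative and continuous on $\RNmz$, hence bounded on the compact set $A_\delta\supseteq\mathrm{supp}\,\nu_\eps^\delta$, so --- after replacing it by a function of $C_c(\RNmz)$ coinciding with it on $A_\delta$ --- the term becomes amenable to weak-$\ast$ convergence.

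The crucial observation, and what makes the limiting $\nu$ a \emph{finite} measure, is that, the two summands in \eqref{eq:suff} being nonnegative, one has $\limsup_{\eps\to0}\int_{B(0,R)}\rho_\eps\le M$ for \emph{every} $R>0$ with the \emph{same} constant $M$. Hence $\limsup_{\eps\to0}\nu_\eps^\delta(\RR^N)=\limsup_{\eps\to0}\int_{A_\delta}\rho_\eps\le\limsup_{\eps\to0}\int_{B(0,\delta^{-1})}\rho_\eps\le M$, uniformly in $\delta\in J$, and likewise $\limsup_{\eps\to0}\mu_\eps^\delta(\Snmu)\le M$. Taking $\delta_j:=2^{-j}$, and using that $\Snmu$ and $A_{\delta_j}$ are compact and the relevant masses are eventually bounded, I would extract by a diagonal procedure an infinitesimal sequence $\{\eps_k\}$ along which, for every $j$, $\mu_{\eps_k}^{\delta_j}\stackrel{\ast}{\rightharpoonup}\mu^{\delta_j}$ in $\scrM(\Snmu)$ and $\nu_{\eps_k}^{\delta_j}\stackrel{\ast}{\rightharpoonup}\nu^{\delta_j}$ in $\scrM_{\rm loc}(\RR^N)$, with $\mu^{\delta_j}(\Snmu)\le M$, $\mathrm{supp}\,\nu^{\delta_j}\subseteq A_{\delta_j}$ and $\nu^{\delta_j}(\RR^N)\le M$.

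Next I would exploit monotonicity in $j$. As $B(0,\delta_{j+1})\subseteq B(0,\delta_j)$ and $A_{\delta_j}\subseteq A_{\delta_{j+1}}$, the measures $\mu^{\delta_j}$ are nonincreasing and the $\nu^{\delta_j}$ nondecreasing; therefore $\mu:=\inf_j\mu^{\delta_j}\in\scrM(\Snmu)$ and $\nu:=\sup_j\nu^{\delta_j}$ are positive measures depending only on $\{\rho_{\eps_k}\}$, and $\nu(\RR^N)\le M<+\infty$ with $\nu(\{0\})=0$, so $\nu\in\scrM(\RR^N)$ --- the finiteness being the reward of the uniform bound above. Moreover $\int_{\Snmu}|\sigma\cdot\xi|^2\,\mathd\mu_{\eps_k}^{\delta_j}\to\int_{\Snmu}|\sigma\cdot\xi|^2\,\mathd\mu^{\delta_j}$ as $k\to\infty$ and $\int_{\Snmu}|\sigma\cdot\xi|^2\,\mathd\mu^{\delta_j}\to\int_{\Snmu}|\sigma\cdot\xi|^2\,\mathd\mu$ as $j\to\infty$ (dominated convergence for the decreasing sequence of finite measures), while $\int_{\RR^N}\varphi_\xi\,\mathd\nu_{\eps_k}^{\delta_j}\to\int_{A_{\delta_j}}\varphi_\xi\,\mathd\nu^{\delta_j}$ as $k\to\infty$ and $\int_{A_{\delta_j}}\varphi_\xi\,\mathd\nu^{\delta_j}\to\int_{\RNmz}\varphi_\xi\,\mathd\nu$ as $j\to\infty$ (monotone convergence for the increasing sequence of measures, applied to $\varphi_\xi\ge0$). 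Collecting the three ranges, for fixed $\xi$ and $j$ both $\limsup_k I_{\eps_k}(\xi;\RR^N)$ and $\liminf_k I_{\eps_k}(\xi;\RR^N)$ lie within $\tfrac1{12}\delta_j^2|\xi|^4 M+2M\delta_j^2$ of $\tfrac12\int_{\Snmu}|\sigma\cdot\xi|^2\,\mathd\mu^{\delta_j}+\int_{A_{\delta_j}}\varphi_\xi\,\mathd\nu^{\delta_j}$; letting $j\to\infty$ then shows that $\lim_k I_{\eps_k}(\xi;\RR^N)$ exists and equals $\tfrac12\int_{\Snmu}|\xi\cdot\sigma|^2\,\mathd\mu(\sigma)+\int_{\RNmz}\varphi_\xi(z)\,\mathd\nu(z)$, which is the assertion.

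The main obstacle is exactly what dictates this detour: the map $z\mapsto(1-\cos(z\cdot\xi))/|z|^2$ is discontinuous at the origin, so one cannot pass to the limit in $I_\eps(\xi;\RR^N)$ by merely regarding $\{\rho_\eps\LN\}$ as weak-$\ast$ converging Radon measures --- hence the annulus $A_\delta$ and the parameter $\delta$. One must also be careful to extract the finiteness of $\nu$ from the $R$-uniform bound $\limsup_{\eps\to0}\int_{B(0,R)}\rho_\eps\le M$, rather than from the merely $\delta$-dependent bounds one would obtain by splitting $A_\delta$ at $|z|=1$ and invoking the decay part of \eqref{eq:suff} separately; the latter route would yield only a (locally finite) Radon measure on $\RNmz$.
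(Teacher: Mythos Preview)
Your argument is correct and uses the same three-range decomposition via the auxiliary parameter $\delta$ as the paper. The difference is in the bookkeeping for the medium range and the passage $\delta\to0$. The paper works with the \emph{global} measures $\nu_\eps=\rho_\eps\LN$ on $\RR^N$ and extracts a single local weak-$\ast$ limit $\nu$; in order to pass to the limit in $\int_{A_\delta}\varphi_\xi\,\mathd\nu_{\eps_k}$ it must then ensure $\nu(\partial A_\delta)=0$, which forces it to avoid a countable set of ``bad'' values of $\delta$, and a second diagonal extraction (over $\delta_n$) is needed to produce $\mu$. You instead work with the \emph{restricted} measures $\nu_\eps^{\delta_j}=\rho_\eps\LN\restriction A_{\delta_j}$ along the fixed dyadic sequence $\delta_j=2^{-j}$, extract all limits in one diagonal step, and then exploit the monotonicity $\mu^{\delta_j}\downarrow\mu$, $\nu^{\delta_j}\uparrow\nu$ to let $j\to\infty$. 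This sidesteps the boundary issue entirely --- since each $\nu_{\eps_k}^{\delta_j}$ is supported in the compact annulus $A_{\delta_j}$, convergence against $\varphi_\xi\in C(A_{\delta_j})$ is automatic --- and delivers $\nu(\{0\})=0$ by construction, whereas the paper's $\nu$ may charge the origin (immaterially, since the limiting integral is over $\RNmz$). Your route is slightly more streamlined; the paper's has the advantage that its $\nu$ is exactly the weak-$\ast$ limit of $\{\rho_{\eps_k}\LN\}$, which is convenient for the later use in \autoref{cor:suff} and \autoref{thm:main}.
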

\begin{proof}
    Let us fix $\delta\in J$.
    In order to compute the desired limit
	we part $\RR^N$ in three regions:
	$B(0,\delta)$, $A_\delta$ and $B(0,\delta^{-1})^c$,
	where $A_\delta\coloneqq \{ z \in \RR^N : \delta <|z|<\delta^{-1}\}$.
	The proof is then divided into several steps:
	for each given $\delta\in J$ (except a countable family of them, see Step 2 below)
	we take the limits as $\eps\to 0$ of 
	$I_\eps(\xi;B(0,\delta))$, $I_\eps(\xi;A_\delta)$, and $I_\eps(\xi;B(0,\delta^{-1})^c)$.
    For the analysis of the first two terms
    the starting point is the observation that
    \eqref{eq:suff} implies for every $R>0$ the existence of $\bar\eps_R\in J$
    such that
    \begin{equation}\label{eq:L1}
       \int_{B(0,R)} \rho_\eps(z)\mathd z \leq M+1
       \qquad\text{for every $\eps \in (0,\bar\eps_R)$}
    \end{equation}
    (cf. \eqref{eq:levy}).
	In the final step we conclude
	by summing up the three contributions and taking the limit as $\delta\to0$
	
	\noindent\underline{\sc Step 1: long range interactions}.
	The term $I_\eps(\xi;B(0,\delta^{-1})^c)$ is readily estimated
	by means of \eqref{eq:suff}:
	for every $\delta\in J$ we have
	\begin{equation}\label{eq:long}
	    \limsup_{k\to+\infty} I_{\eps_k}(\xi;B(0,\delta^{-1})^c)\leq 2M \delta^2.
	\end{equation}
    
	\noindent\underline{\sc Step 2: medium range interactions}.
	For all $\eps\in J$,
	let us define the measure $\nu_\eps\coloneqq \rho_\eps \mathscr{L}^N$ (cf.~\eqref{eq:nueps}).
    Let $\{R^{(n)}\}_{n\in\NN}$ be a strictly increasing sequence of strictly positive real numbers.
	It follows from \eqref{eq:L1} that
	for every $n\in \NN$ there exists $\eta^{(n)}\in J$ such that it holds
	\begin{gather*}
	\int_{B(0,R^{(n)})} \mathd \nu_{\eps}
		= \int_{B(0,R^{(n)})} \rho_{\eps}(z)\mathd z \leq M+1.
        \qquad\text{for every } \eps\in (0,\eta^{(n)}).
	\end{gather*}
    We can choose each $\eta^{(n)}$ so that
    $\{\eta^{(n)}\}$ is strictly decreasing.
	From the previous bound,
    for each $n\in \NN$ we deduce the existence of a finite positive Radon measure $\nu^{(n)}\in \scrM(B(0,R^{(n)}))$ and 
    of a sequence $\{\eps_k^{(n)}\}\subset (0,\eta^{(n)})$ such that
    $\nu_{\eps_k^{(n)}} \stackrel{\ast}{\rightharpoonup} \nu^{(n)}$
	weakly-$\ast$ in $\scrM(B(0,R^{(n)}))$.
    By grounding on this property,
    a diagonal argument yields the existence
	of a sequence $\{\eps_k\}\subset J$ and
	of a Radon measure $\nu$ on $\RR^N$
	such that
	$\nu_{\eps_k} \stackrel{\ast}{\rightharpoonup} \nu$
	locally weakly-$\ast$ in $\scrM_{\rm loc}(\RR^N)$.
	In particular, by the lower semicontinuity of the total variation
	with respect to the weak-$\ast$ convergence,
	since $M$ does not depend on $R$,
	we infer that $\nu$ is finite.
	
	We next resort to a known property of Radon measures:
	if $\{E_\delta\}_{\delta\in J}$ is a family
	of pairwise disjoint Borel sets in $\RR^N$ and
	if $\mu\in \scrM_{\rm loc}(\RR^N)$,
	then $\mu(E_\delta)>0$ for at most countably $\delta\in J$
	(see \cite[page~29]{AFP00}).
	By applying this property to the family $\{\partial A_\delta\}_{\delta\in J}$ and the measure $\nu$,
	we deduce that
	the set of discontinuty points of the function
	\[
	    \chi_\delta (z)\coloneqq
	    \begin{cases}
	        0 & \text{if }  z\notin A_\delta, \\
	        1 & \text{if }  z\in A_\delta
	    \end{cases}
	\]
	is $\nu$-negligible for all $\delta\in J$,
	but those in a certain countable subset $C\subset J$.
	As a consequence,
	since $\{\nu_{\eps_k}\}$ weakly-$\ast$ converges to $\nu$,
	the following equality holds for every $\delta\in J\setminus C$:
	\begin{align}
		\lim_{k\to +\infty} I_{\eps_k}(\xi;A_\delta) 
			& =
			\lim_{k\to +\infty}
				\int_{\RR^N} \chi_{\delta}(z)\frac{1-\cos( z\cdot \xi)}{|z|^2} \mathd \nu_{\eps_k}(z)
			\nonumber\\
			& = \int_{A_\delta} \frac{1-\cos( z\cdot \xi)}{|z|^2} \mathd \nu(z). \label{eq:medium}
	\end{align}
	
	\noindent\underline{\sc Step 3: short range interactions}.
	We adapt the approach of \cite[Subsec. 1.1]{Pon04}.
	For a fixed $\delta\in J$ and each $\eps\in J$
	we define the Radon measure $\mu_\eps^{(\delta)}$ on $\Snmu$ by setting
	\begin{equation*}
	     \mu_{\eps}^{(\delta)}(E) \coloneqq
	        \int_E \left(\int_0^\delta t^{N-1} \rho_\eps(t \sigma)\mathd t\right)
	    \mathd\Hnmu(\sigma)
	    \qquad\text{for all $\Hnmu$-measurable sets } E\subset \Snmu.
	\end{equation*}
	By means of the coarea formula  we deduce from \eqref{eq:L1} with $R=1$ that
	definitively $\mu_\eps^{(\delta)}(\Snmu)\leq M+1$.
	Thus, for all $\delta\in J$,
	there exists an infinitesimal sequence $\{\eps_k^{(\delta)}\}\subset J$ and
    a finite Radon measures $\mu^{(\delta)}\in \scrM(\Snmu)$ such that
    $\mu_{\eps_k^{(\delta)}}^{(\delta)}\stackrel{\ast}{\rightharpoonup} \mu^{(\delta)}$ weakly-$\ast$ in $\scrM(\Snmu)$ as $k\to+\infty$.
    Note that it holds $\mu^{(\delta)}(\Snmu)\leq M+1$ for every $\delta\in J$.
    
    Next, by a Taylor expansion of the cosine in $0$ we obtain
    \begin{align*}
    I_{\eps}(\xi;B(0,\delta)) & =
        \frac{1}{2}\int_{B(0,\delta)}  \rho_{\eps}(z)|\xi\cdot \hat{z}|^2 \mathd z
        + \int_{B(0,\delta)} \rho_{\eps}(z) O(|\xi|^3|z|) \mathd z \\
        & = \frac{1}{2}\int_{\Snmu}  |\xi\cdot \sigma|^2 \mathd \mu_{\eps}^{(\delta)}(\sigma)
        + \int_{B(0,\delta)} \rho_{\eps}(z) O(|\xi|^3|z|) \mathd z.
    \end{align*}
	Since  $\sigma\mapsto |\xi\cdot \sigma|^2$ is a continuous function on $\Snmu$,
	in view of the weak-$\ast$ convergence of $\{\mu_{\eps_k^{(\delta)}}^{(\delta)}\}$
	we can take the limit as $k\to+\infty$.
	Thus, for every $\delta\in J$, we find
	\begin{align}\label{eq:short}
    \limsup_{k\to+\infty} I_{\eps_k^{(\delta)}}(\xi;B(0,\delta)) =
        \frac{1}{2}\int_{\Snmu}  |\xi\cdot \sigma|^2 \mathd \mu^{(\delta)}(\sigma)
        + \limsup_{k\to+\infty} \int_{B(0,\delta)} \rho_{\eps_k^{(\delta)}}(z) O(|\xi|^3|z|) \mathd z.
    \end{align}
	
	\noindent\underline{\sc Step 4: limit as $\delta\to 0$}.
	In order to achieve the conclusion,
	we need to take the limit as $\delta\to 0$
	of the terms considered in Steps 1 -- 3.
	
	To this aim,
	let us consider 
	the sequence $\{\eps_k\}\subset J$ and the set $C\subset J$ given by Step 2.
	Let also $\{\delta_n\}_{n\in\NN}\subset J\setminus C$ be an infinitesimal sequence.
	We observe that for any $n\in \NN$,
	by reasoning as in Step 3,
	we can inductively extract a subsequence $\{\eps_k^{(n)}\}\subset \{\eps_k^{(n-1)}\} \subset \{\eps_k\}$ such that
	the sequence of measures $\mu_k^{(n)}\coloneqq \mu^{(\delta_n)}_{\eps_k^{(n)}}$
	weakly-$\ast$ converges in $\scrM(\Snmu)$ to some $\mu^{(\delta_n)}$.
	Step 3 yields as well the existence
	of an unrelabeled  subsequence $\{\delta_n\}$ and
	of a Radon measure $\mu\in \scrM(\Snmu)$ such that
	the sequence $\{\mu^{(\delta_n)}\}$ weakly-$\ast$ converges in $\scrM(\Snmu)$ to $\mu$.
	
	Let us now define the diagonal sequence $\{\tilde \eps_k\}$
	by setting $\tilde \eps_k\coloneqq \eps_k^{(k)}$ for every $k\in \NN$.
	Then, recalling \eqref{eq:L1},
	it follows from \eqref{eq:short} that
	\begin{equation}\label{eq:int-mu}
	   \lim_{n\to+\infty}\limsup_{k\to+\infty} I_{\tilde \eps_k}(\xi;B(0,\delta_n)) =
	   \frac{1}{2}\int_{\Snmu}  |\xi\cdot \sigma|^2 \mathd \mu(\sigma).
	\end{equation}
	We also note that
	by monotone convergence we can take the limit also in \eqref{eq:medium}:
	\begin{equation}\label{eq:int-nu}
	   \lim_{n\to+\infty}\lim_{k\to+\infty} I_{\tilde \eps_k}(\xi;A_{\delta_n}) =
	   \int_{\RR^N\mz} \frac{1-\cos( z\cdot \xi)}{|z|^2} \mathd \nu(z).
	\end{equation}
	Eventually,
	by collecting \eqref{eq:long}--\eqref{eq:int-nu}, we get
	\begin{align*}
		\lim_{k\to +\infty} I_{\tilde \eps_k}(\xi;\RR^N)
		 = \lim_{n\to +\infty} \limsup_{k\to +\infty}
			\Big[I_{\tilde \eps_k}\big(\xi;B(0,\delta_n)\big)
			+I_{\tilde \eps_k}(\xi;A_{\delta_n})
			+I_{\tilde \eps_k}\big(\xi;B(0,\delta_n^{-1})^c\big)
			\Big],
	\end{align*}
	from which the conclusion follows.
\end{proof}

We are now in a position to prove \autoref{thm:compactness}.

\begin{proof}[Proof of \autoref{thm:compactness}]
    By \autoref{stm:growth}	we know that
	for $k$ sufficiently large
	$I_{\eps_k}(\xi;\RR^N)$ grows at most as $1+|\xi|^2$.
	\autoref{stm:cpt}, instead, characterizes the pointwise limit
	$\{I_{\eps_k}(\,\cdot\,;\RR^N)\}$,
	where $\{\eps_k\}\subset J$ is a suitable infinitesimal sequence.
	Thus, for every $\psi\in L^2_w(\RR^N)$,
	by dominated convergence we deduce 
	\[
		\lim_{k\to+\infty} \hat{\scrF}_{\eps_k}(\psi)=
		\int_{\RR^N}|\psi(\xi)|^2
		    \left[
		      \frac{1}{2}
			\int_{\Snmu} |\xi \cdot \sigma|^2\mathd\mu(\sigma)
			+
			\int_{\RNmz}
				\frac{1-\cos( z\cdot \xi)}{|z|^2}
			\mathd\nu(z)
		    \right]
		\mathd \xi
	\]
	where $\mu\in\scrM(\Snmu)$ and $\nu\in\scrM(\RR^N)$ are as in \autoref{stm:cpt}.
	Formula \eqref{eq:cpt} is then achieved
	by recalling that the Fourier transform is a one-to-one correspondence between $H^1(\RR^N)$ and $L^2_w(\RR^N)$, and
	by computations similar to the ones
	in the proof of \autoref{lemma:fourier}.
	
	We are now only left to show that
	the right-hand side in \eqref{eq:cpt} is finite for every $u\in H^1(\RR^N)$.
	As for the gradient term, its finiteness is trivial.
	For what concerns the nonlocal term, we note that
	in view of \autoref{stm:growth} and of the construction in \autoref{stm:cpt} there holds
    $$
    \int_{\RR^N\mz}\frac{1-\cos(z\cdot\xi)}{|z|^2}\mathd \nu(z)\leq 2M(1+|\xi|^2)
    $$
    pointwise in $\RR^N$.
    Thus, we deduce
    $$
    \int_{\RR^N} |\psi(\xi)|^2
        \int_{\RR^N\mz}\frac{1-\cos(z\cdot\xi)}{|z|^2}\mathd \nu(z)\mathd\xi
        \leq 2M \int_{\RR^N}|\psi(\xi)|^2(1+|\xi|^2)\mathd\xi$$
    for every $\psi\in L^2_w(\RR^N)$.
    The claim follows then
    by the same computations as in \autoref{lemma:fourier}.
\end{proof}

\section{Necessary and sufficient conditions for the BBM formula}\label{sec:suffnec}

The goal of this section is to prove that
conditions \ref{i} and \ref{ii} in \autoref{thm:main} are both sufficient and necessary for the BBM formula to hold.
We first address the sufficiency
by proving \autoref{cor:suff},
then we turn to the necessity, that is,
to \autoref{thm:main}.

\subsection{Sufficiency}\label{sec:suff}
As we outlined in \autoref{sec:intro},
\autoref{cor:suff} is an immediate consequence of the proof of \autoref{thm:compactness}. 

\begin{proof}[Proof of \autoref{cor:suff}]
    Under the current assumptions,
    we know that there exist
    an infinitesimal sequence of $\{\eps_k\}$ and
    two Radon measures $\mu\in \scrM(\Snmu)$ and $\nu\in \scrM(\RR^N)$
    such that \eqref{eq:cpt} is satisifed.

    In order to conclude,
    it now suffices to recall that
    the measure $\nu$ is the weak-$\ast$ limit
    of the sequence defined by \eqref{eq:nueps}
    (see Step 2 in the proof of \autoref{stm:cpt}).
    We are currently supposing that
    such sequence weakly-$\ast$ converges to $\alpha \delta_0$ for a suitable $\alpha\geq 0$:
    then, necessarily, $\nu=\alpha \delta_0$
    and the second integral on the right-hand side in \eqref{eq:cpt} vanishes.
    The conclusion is thus achieved.
\end{proof}

\subsection{Necessity}\label{subsec:nec}
We now focus on the proof of \autoref{thm:main},
thus showing that
the sufficient conditions devised in the previous subsection are also necessary
for the BBM formula to hold.
As before,
we rely on the formulation in Fourier variables provided by
\autoref{lemma:fourier}, or, in other words,
we assume that \eqref{eq:nonloc-to-loc-F} holds for every $\psi \in L^2_w(\RR^N)$ and for a given measure $\lambda\in \scrM({\mathbb{S}^{N-1}}).$
We first show that
such a nonlocal-to-local formula forces
the restrictions of the kernels $\{\rho_\eps\}$ to any large ball
to belong definitively to $L^1$,
while the integrals of $\rho_\eps(z)/|z|^2$ on the complement of such balls
need to become increasingly smaller
(see \eqref{eq:nec}).
Then, item \ref{ii} in \autoref{thm:main} will be derived as well.

\begin{proposition}\label{stm:concentration}
    Suppose that
   	the convergence in \eqref{eq:nonloc-to-loc-F} holds
    for every $\psi\in L^2_w(\RR^N)$ and for a given measure $\lambda\in \scrM({\mathbb{S}^{N-1}})$.
    Then, there exists $M\geq0$ depending only on $N$ and  $\lambda$
    such that for every $R>0$ condition \eqref{eq:nec} is satisfied.
\end{proposition}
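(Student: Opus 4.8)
I want to show that the Fourier-side convergence $\hat\scrF_{\eps_k}[\psi]\to\hat\scrF[\psi]$ for every $\psi\in L^2_w(\RR^N)$ forces the uniform bound \eqref{eq:nec}. The natural tool is the uniform boundedness principle: the limiting functional $\hat\scrF[\psi]=\int_{\RR^N}|\psi(\xi)|^2\int_{\Snmu}|\xi\cdot\sigma|^2\mathd\lambda(\sigma)\mathd\xi$ is finite and controlled by $\lambda(\Snmu)\int|\xi|^2|\psi|^2$, so the sequence of functionals $\{\hat\scrF_{\eps_k}\}$ is pointwise bounded on the Banach space $L^2_w(\RR^N)$. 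Each $\hat\scrF_{\eps_k}$ is a (possibly unbounded, but lower semicontinuous and convex) functional; after fixing that technical point—e.g. by noting that $\psi\mapsto\hat\scrF_{\eps_k}[\psi]$ is the supremum over truncations of the kernel of genuinely bounded quadratic forms, hence lower semicontinuous—one gets a uniform bound of the shape $\hat\scrF_{\eps_k}[\psi]\le C\|\psi\|_{L^2_w}^2$ for all $k$ and all $\psi$, with $C$ depending only on $N$ and $\lambda(\Snmu)$.

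Translating this into a pointwise statement about the inner integral $g_{\eps_k}(\xi):=\int_{\RR^N}\rho_{\eps_k}(z)\frac{1-\cos(z\cdot\xi)}{|z|^2}\mathd z$ is the next step, and here I would use the scaling trick alluded to in the introduction (\autoref{rmk:poincare}). Testing $\hat\scrF_{\eps_k}$ against rescaled concentrating profiles $\psi_{\xi_0,r}(\xi)=r^{-N/2}\psi_0((\xi-\xi_0)/r)$ and letting $r\to0$ should yield, after using the boundedness $\hat\scrF_{\eps_k}[\psi_{\xi_0,r}]\le C\|\psi_{\xi_0,r}\|^2_{L^2_w}\sim C(1+|\xi_0|^2)$, an estimate of the form $g_{\eps_k}(\xi_0)\le C'(1+|\xi_0|^2)$ for a.e. $\xi_0$, uniformly in $k$; in fact one wants this for a well-chosen discrete set of directions and magnitudes of $\xi_0$. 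Concretely I would pick $\xi_0=t e_j$ for each coordinate direction $e_j$ and integrate over $t\in(0,T)$ for suitable $T=T(R)$, so that averaging $\frac{1-\cos(tz_j)}{|z|^2}$ against $t$ produces a positive lower bound proportional to $\min\{t^2 z_j^2,1\}/|z|^2$—this is the Lévy-measure-type mechanism already visible in \eqref{eq:levy}.

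Finally, summing the direction-averaged estimates over $j=1,\dots,N$ converts $\sum_j z_j^2=|z|^2$ into a clean lower bound: roughly $\int_{\RR^N}\rho_{\eps_k}(z)\,\frac{\min\{T^2|z|^2,1\}}{|z|^2}\mathd z\le C''(1+T^2)$ up to constants. Splitting the region of integration at $|z|=T^{-1}$ (which, setting $R=T^{-1}$, is the ball $B(0,R)$) recovers exactly the two terms in \eqref{eq:nec}: on $B(0,R)$ one has $\min\{T^2|z|^2,1\}/|z|^2=T^2$ and the bound reads $T^2\int_{B(0,R)}\rho_{\eps_k}\le C''(1+T^2)$, i.e. $\int_{B(0,R)}\rho_{\eps_k}\le C''(1+R^2)/R^2\cdot R^2\cdot\ldots$—after bookkeeping, $\int_{B(0,R)}\rho_{\eps_k}\mathd z\lesssim M$; on $B(0,R)^c$ one has $\min\{T^2|z|^2,1\}/|z|^2=1/|z|^2$ and the bound reads $\int_{B(0,R)^c}\rho_{\eps_k}(z)/|z|^2\mathd z\lesssim M/R^2$. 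Taking $\limsup_{k\to\infty}$ of the sum gives \eqref{eq:nec} with $M$ depending only on $N$ and $\lambda(\Snmu)$.

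The main obstacle is the first step: $\hat\scrF_{\eps_k}$ is not a priori a bounded operator, so the uniform boundedness principle cannot be invoked naively. I expect the cleanest route is to fix a large ball $B(0,R)$, work with the truncated kernels $\rho_{\eps_k}\mathbf 1_{B(0,R)}$ (which define genuinely bounded quadratic forms on $L^2_w$, since $(1-\cos(z\cdot\xi))/|z|^2\le\tfrac12|\xi|^2$ on that ball), apply uniform boundedness to those truncations—using that the pointwise limit $\hat\scrF[\psi]$ dominates them in the liminf sense—and only at the end combine with the long-range control which follows by a parallel but easier argument testing against a fixed profile. Carrying the constant's dependence carefully through the truncation and the rescaling is the bookkeeping that needs care, but no new idea.
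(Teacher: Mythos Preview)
Your plan has a genuine gap at the uniform-boundedness step, and the fix you propose is circular. You want a Banach--Steinhaus argument for the quadratic forms $\hat\scrF_{\eps_k}$, but the hypothesis only gives pointwise convergence $\hat\scrF_{\eps_k}[\psi]\to\hat\scrF[\psi]$, which ensures that the sequence is \emph{eventually} finite at each $\psi$---not that any individual $\hat\scrF_{\eps_k}$ is everywhere finite, let alone continuous. Your remedy of truncating to $\rho_{\eps_k}\mathbf 1_{B(0,R)}$ does not help: the bound $(1-\cos(z\cdot\xi))/|z|^2\le\tfrac12|\xi|^2$ only shows that the truncated form is bounded on $L^2_w$ with constant $\tfrac12\int_{B(0,R)}\rho_{\eps_k}$, and finiteness of that integral is precisely (half of) what you are trying to prove. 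The subsequent step---extracting pointwise values $g_{\eps_k}(\xi_0)$ via concentrating profiles $\psi_{\xi_0,r}$, $r\to0$---has the same defect: passing to the limit requires continuity or at least local integrability of $g_{\eps_k}$, which again presupposes the L\'evy-type bounds you are after.

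The paper sidesteps all of this by never asking for a $k$-uniform operator bound. It fixes one nonzero radial $\psi\in L^2_w(\RR^N)$ and, for each $R>0$, applies the convergence hypothesis to the single dilate $\psi_R(\xi)=R^{N/2}\psi(R\xi)$, obtaining directly $\limsup_k\hat\scrF_{\eps_k}[\psi_R]\le c/R^2$. Then Fubini (on nonnegative integrands, so no finiteness issue) and the radiality of $\psi$ reduce the inner $\xi$-integral to a one-variable expression in $|z|$; an elementary lower bound on it, split at $|z|=R$, yields both terms of \eqref{eq:nec} simultaneously. This is exactly the Poincar\'e-type rescaling of \autoref{rmk:poincare} that you mention, but it is used \emph{in place of} uniform boundedness rather than downstream of it: each $R$ gets its own test function, the conclusion at that $R$ is a $\limsup_k$ statement, and no uniformity in $k$ is ever needed.
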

\begin{proof}
    Throughout the proof, $c_N$ is a generic positive constant that depends just on the dimension $N$ and whose value may change from line to line.

    Let $\psi \in L^2_w(\RR^N)\mz$ be a radial function.
    Then, there exists a measurable $v\colon [0,+\infty) \to \RR$
    such that $\psi(\xi)=v(|\xi|)$ and that
    \begin{equation}\label{eq:integr-v}
    	0<\int_0^{+\infty} t^{N-1}(1+t^2) v^2(t) \mathd t <+\infty.
    \end{equation}
	We define
	\begin{gather*}
		\psi_R(\xi) \coloneqq R^{N/2} \psi ( R \xi )
		\qquad\text{for all } R>0,
	\end{gather*}
	and we observe that
	a change of variables yield
	\begin{gather}
		\int_{B(0,R^{-1})^c} |\psi_R(\xi)|^2\mathd \xi
		= \int_{B(0,1)^c} |\psi(\xi)|^2 \mathd \xi,
		\label{eq:psik} \\
		\int_{\RR^N} |\xi|^2 |\psi_R(\xi)|^2\mathd \xi =
		\frac{1}{R^2} \int_{\RR^N} |\xi|^2 |\psi(\xi)|^2\mathd \xi\nonumber.
	\end{gather}
	
	By choosing $\psi=\psi_R$
	in $\eqref{eq:nonloc-to-loc-F}$,
    we infer that
	\begin{align}
    \label{eq:poincare}
		\lim_{k\to+\infty}\int_{\RR^N} | \psi_R (\xi) |^2
			\int_{\RR^N} \rho_{\eps_k}(z)\frac{1 - \cos( z\cdot \xi)}{| z |^2}
		\mathd z
		\mathd \xi &
		\leq \lambda(\Snmu) \int_{\RR^N} |\xi|^2| |\psi_R(\xi)|^2\mathd \xi \\
		&\nonumber
    = \frac{c}{R^2},
	\end{align}
	where $c\coloneqq c(\lambda,\psi)$ is a suitable constant.
	We exchange the integrals on the left-hand side of \eqref{eq:poincare} by the Fubini theorem, and,
	for any fixed $z\in \RNmz$,
    recalling that $\hat{z}=z/|z|$,
	we let $L_{\hat z}$ be a rotation
    such that $\hat{z} = L_{\hat{z}}^{\mathrm{t}} e_1$,
    where the superscript $\mathrm{t}$ denotes transposition.
    A change of variables yields
    \begin{align}
    \label{eq:cov}
	\int_{\RR^N} 
			| \psi_R (\xi) |^2 \big( 1-\cos(z \cdot \xi) \big)
	\mathd \xi
	& = \int_{\RR^N} 
			| \psi_R (\xi) |^2 \Big( 1 - \cos( |z| e_1 \cdot (L_{\hat z}\xi) \big) \Big)
		\mathd \xi \\
	\nonumber & = \int_{\RR^N} 
			| \psi_R (\xi) |^2 \big( 1 - \cos( |z| e_1 \cdot \xi) \big)
		\mathd \xi
    \end{align}
    (recall that $\psi_R$ is radial).
    By plugging \eqref{eq:cov} into \eqref{eq:poincare},
    we obtain
    \begin{align*}
	    \lim_{k\to+\infty}\int_{\RR^N}
	    \frac{\rho_{\eps_k}( z)}{| z |^2} \int_{\RR^N} 
			| \psi_R (\xi) |^2  \big( 1 - \cos( |z| e_1 \cdot \xi) \big)
		\mathd \xi \mathd z \leq \frac{c}{R^2}.
    \end{align*}
    
    From now on, we detail the argument for $N\geq 4$ only;
    the lower dimensional cases may be addressed
    by similar (but lighter) computations.
    First, we change variables to find
    \begin{align}\label{eq:unif-bound2}
	    \lim_{k\to+\infty} \int_{\RR^N}
	    \frac{\rho_{\eps_k}( z)}{| z |^{N+2}}
	    \int_{\RR^N}
	    	\left| \psi_R \left( \frac{\xi}{| z |} \right) \right|^2
			\big( 1 - \cos( e_1 \cdot \xi) \big)
		\mathd \xi \mathd z
		\leq \frac{c}{R^2}.
    \end{align}
    Next, we rewrite the integral with respect to $\xi$ on the left-hand side of \eqref{eq:unif-bound2}
    by employing spherical coordinates:
    for $\sigma \in \Snmu$
    we consider $\theta_1,\dots,\theta_{N-2}\in [0,\pi]$ and $\theta_{N-1} \in [0,2\pi)$
    such that
    \begin{align*}
	    &e_1 \cdot \sigma = \cos(\theta_1), \\
	    &e_i \cdot \sigma = \cos(\theta_i)\prod_{j=1}^{i-1} \sin(\theta_j)
	    \qquad\text{for }i=2,\dots,N-1,\\
	    &e_N \cdot \sigma = \prod_{j=1}^{N-1} \sin(\theta_j).
    \end{align*}
    By the coarea formula,
    recalling that $\psi_R(\xi)=R^{N/2}v(R|\xi|)$ for $v$ as above, it holds
    \begin{align}
     \nonumber  &   \int_{\RR^N}
        	\left| \psi_R \left( \frac{\xi}{| z |} \right) \right|^2
			\big( 1 - \cos(e_1 \cdot \xi) \big)
	    \mathd \xi \\
	  \nonumber  &\quad  = R^N \int_0^{+\infty} v^2 \left( \frac{R}{| z | } t \right) t^{N-1} 
	        \int_{\Snmu}
			    \big( 1 - \cos(t e_1 \cdot \sigma) \big)
		    \mathd \mathscr{H}^{N-1}(\sigma) \mathd t \\
	\nonumber	&\quad = 
		    \int_0^{2\pi} \mathd \theta_{N-1}
		    \prod_{j=2}^{N-2} \int_0^{\pi}
		        \sin^{N-j-1}(\theta_{j})   
		    \mathd \theta_{j} \cdot\\
       \nonumber &\qquad\qquad \cdot R^N \int_0^{+\infty} v^2 \left( \frac{R}{| z | } t \right) t^{N-1}
		    \int_0^{\pi}
		        \big[
		        1 - \cos\big( t \cos(\theta_1) \big)
		        \big]
		        \sin^{N-2}(\theta_1)
		    \mathd \theta_1
		    \mathd t \\
	\nonumber	& \quad =
		    c_N
		   R^N \int_0^{+\infty} v^2 \left( \frac{R}{| z | } t \right) t^{N-1}
		    \int_0^{\pi}
		        \big[
		        1 - \cos\big( t \cos(\theta) \big)
		        \big]
		        \sin^{N-2}(\theta)
		    \mathd \theta
		    \mathd t \\
		\nonumber & \quad =
		    c_N
		    R^N \int_0^{+\infty} v^2 \left( \frac{R}{| z | } t \right) t
			\int_{-t}^{t}
				\big(1 - \cos(s) \big)
				(t^2 - s^2)^{\frac{N-3}{2}}
			\mathd s
			\mathd t 
    \end{align}
    Since the integrand in the last expression is positive,
    by restricting the domain of integration we find
    \begin{align}
        \int_{\RR^N}
                	\left| \psi_R \left( \frac{\xi}{| z |} \right) \right|^2
        			\big( 1 - \cos(e_1 \cdot \xi) \big)
        	    \mathd \xi
	    & \geq c_N
		    R^N \int_0^{+\infty} v^2 \left( \frac{R}{| z | } t \right) t^{N-2}
			\int_{-\frac{t}{2}}^{\frac{t}{2}}
				\big(1 - \cos(s) \big)
			\mathd s
			\mathd t \nonumber \\
		& \geq c_N
			R^N \int_0^{+\infty} v^2 \left( \frac{R}{| z | } t \right) t^{N-1}
		    \left(
		        1 - \frac{2}{t} \sin\left(\frac{t}{2}\right)
		    \right)
		    \mathd t. \label{eq:lower-bound-cos}
    \end{align}
    Next, we proceed by splitting the interval $(0,+\infty)$
    into two regions,
    and we analyze the corresponding integrals separately.
        
    We observe that
    by a Taylor expansion around $0$ there exists $\alpha_0>0$ such that
    \[
        1- \frac{2}{t} \sin\left(\frac{t}{2}\right) \geq \alpha_0 t^2
    	\qquad\text{for every } t\in (0,1].
    \]
    Then, starting from \eqref{eq:unif-bound2} and
    taking into account \eqref{eq:lower-bound-cos},
    we infer
    \begin{align*}
    \frac{c}{R^2} & \geq \limsup_{k\to+\infty}\int_{B(0,R)}
    		    \frac{\rho_{\eps_k}( z)}{|  z |^{N+2}}
    		    \int_{\RR^N}
    		    	\left| \psi_R \left( \frac{\xi}{| z |} \right) \right|^2
    				\big( 1 - \cos( e_1 \cdot \xi) \big)
    			\mathd \xi \mathd z
    			\\
    & \geq \alpha_0 c_N R^N \limsup_{k\to+\infty} \int_{B(0,R)}
        		    \frac{\rho_{\eps_k}( z)}{| z |^{N+2}}
        		    \int_{0}^{\frac{|z|}{R}}
        		    	t^{N+1} v^2 \left( \frac{R}{| z |} t \right) 
	    		    \mathd t \mathd z \\
	& = \frac{\alpha_0 c_N}{R^2} \limsup_{k\to+\infty} \int_{B(0,R)}
	        		    \rho_{\eps_k}( z )
	        		    \int_0^1
	        		    	t^{N+1} v^2 ( t )
		    		    \mathd t \mathd z.
    \end{align*}
	In conclusion, owing to \eqref{eq:integr-v}, we find
	\begin{equation*}
		\limsup_{k\to+\infty}\int_{B(0,R)} \rho_{\eps_k}( z ) \mathd z \leq M_0
	\end{equation*}
	for a suitable $M_0\coloneqq M_0(N,v)$.
    
    We now turn to the contribution accounting for `large' $|z|$.
    Note that there exists $\alpha_1>0$ such that
    \[
        1- \frac{2}{t} \sin\left(\frac{t}{2}\right) \geq \alpha_1
        \qquad\text{for every } t>1.
    \]
    Therefore, by estimates similar to the ones above we obtain
    \begin{align*}
         \frac{c}{R^2} & \geq \limsup_{k\to+\infty} \int_{B(0,R)^c}
            		    \frac{\rho_{\eps_k}( z)}{| z |^{N+2}}
            		    \int_{\RR^N}
            		    	\left| \psi_R \left( \frac{\xi}{| z |} \right) \right|^2
            				\big( 1 - \cos( e_1 \cdot \xi) \big)
            			\mathd \xi \mathd z
            			\\
            & \geq \alpha_1 c_N R^N \limsup_{k\to+\infty} \int_{B(0,R)^c}
                		    \frac{\rho_{\eps_k}( z)}{| z |^{N+2}}
                		    \int_{\frac{|z|}{R}}^{+\infty}
                		    	t^{N-1} v^2 \left( \frac{R}{| z |} t \right) 
        	    		    \mathd t \mathd z\\
    	& = \alpha_1 c_N \limsup_{k\to+\infty} \int_{B(0,R)^c}
    	        		    \frac{\rho_{\eps_k}( z)}{| z |^2}
    	        		    \int_1^{+\infty}
    	        		    	t^{N-1} v^2 ( t )
    		    		    \mathd t \mathd z, \nonumber
        \end{align*}
    and, again by \eqref{eq:integr-v}, we deduce
    \begin{equation*}
    	\limsup_{k\to+\infty}\int_{B(0,R)^c} \frac{\rho_{\eps_k}( z)}{| z |^2} \mathd z \leq \frac{M_1}{R^2}
    \end{equation*}
    for some $M_1\coloneqq M_1(N,v)$.
    
    To conclude the proof,
    we first optimize $M_0$ and $M_1$ with respect to $v$ and
    we choose as $M$ the largest of the two optima;
    note, in particular, that
    $M$ is finite and strictly positive, and depends only on the dimension of the space and on $\lambda(\Snmu)$.
\end{proof}

\begin{remark}\label{rmk:poincare}
    Observe that,
    heuristically,
    inequality \eqref{eq:poincare} has the same structure of a Poincar\'e inequality:
    the $L^2$-norm of a function
    on the left hand-side,
    the $L^2$-norm of its gradient on the right one.
    So, in a sense,
    the integral with respect to $z$ on the left hand-side may be regarded as the inverse of the Poincar\'e constant.
    The latter has a well-known scaling property:
    if $c_P(\Omega)$ denotes the Poincar\'e constant associated with a certain domain $\Omega$,
    then $c_P(R \Omega) = Rc_P(\Omega)$,
    where $R\Omega \coloneqq \{ x\in \RR^N : x/R \in \Omega \}$.
    Such considerations motivated
    the choice of the scaling of the test function $\psi$ in the proof above
    (recall that there we work in Fourier variables). 
\end{remark}

With \autoref{stm:concentration} at hand,
we are now in a position to prove \autoref{thm:main}.

\begin{proof}[Proof of \autoref{thm:main}]
Thanks to \autoref{stm:concentration},
we know that
item \ref{i} holds.
As a consequence,
there is an infinitesimal sequence $\{\eps_k\}$ such that the inequality in \eqref{eq:suff} holds,
and we may invoke the compactness result in \autoref{thm:compactness}.
Thus, there exist a subsequence $\{\eps_{k_n}\}$ and two Radon measures $\mu\in \scrM(\Snmu)$ and $\nu \in \scrM(\RR^N)$ such that
for every $u\in H^1(\RR^N)$
	\begin{equation*}
		\lim_{n\to +\infty} \scrF_{\eps_{k_n}}[u]
		= \frac{1}{2}\int_{\RR^N}
		\left[
		    \int_{\Snmu} | \nabla u(x) \cdot \sigma |^2 \mathd \mu(\sigma)
		    +
		    \int_{\RR^N\mz}
				\frac{| u (x+z) - u (x) |^2}{| z |^2} \mathd \nu(z)
		\right]
		\mathd x.	    
	\end{equation*}
In particular, from the proof of \autoref{thm:compactness}
we know that $\nu$ is the weak-$\ast$ limit in $\scrM_{\rm loc}(\RR^N)$ of the subsequence $\{\nu_{k_n}\}$ defined by
    \begin{equation}\label{eq:nu-kn}
	 		\langle \nu_{k_n} , f \rangle
	 			\coloneqq  \int_{\RR^N} \rho_{\eps_{k_n}}(z) f(z) \mathd z
	 		\qquad\text{for all } f\in C_c(\RR^N).
    \end{equation}
Note that, in principle, the measures $\mu$ and $\lambda$ may differ.
However, since we are assuming \eqref{eq:BBM},
for every $u\in H^1(\RR^N)$ it must hold
\begin{multline*}
		\int_{\RR^N} \int_{\Snmu} | \nabla u(x) \cdot \sigma |^2 \mathd \lambda(\sigma) \mathd x
		\\= \frac{1}{2}\int_{\RR^N}
		\left[
		    \int_{\Snmu} | \nabla u(x) \cdot \sigma |^2 \mathd \mu(\sigma)
		    +
		    \int_{\RR^N\mz}
				\frac{| u (x+z) - u (x) |^2}{| z |^2} \mathd \nu(z)
		\right]
		\mathd x.	    
	\end{multline*}


    By passing to Fourier variables as in the proof of \autoref{lemma:fourier},
    the previous equality becomes
	\begin{multline*}\label{eq:mu-nu-lambda}
    \int_{\RR^N} |\psi (\xi) |^2
					\int_{\Snmu} | \xi \cdot \sigma |^2 \mathd \lambda(\sigma)
					\mathd \xi \\
     =
	\int_{\RR^N} |\psi(\xi)|^2 
		\left[
		\frac{1}{2}
			\int_{\Snmu} |\xi \cdot \hat z|^2
		\mathd\mu(z)
		+
		 \int_{\RNmz}
			\frac{1-\cos( z\cdot \xi)}{|z|^2}
		\mathd\nu(z)
		\right]
	\mathd \xi
    \end{multline*}
	for every $\psi\in L^2_w(\RR^N)$,
    whence,
	by the fundamental theorem of the calculus of variations and the continuity with respect to the $\xi$ variable, we deduce
	\begin{equation}\label{eq:mu-nu-lambda}
        \int_{\Snmu} | \xi \cdot \sigma |^2 \mathd \lambda(\sigma)
        =
		\frac{1}{2} \int_{\Snmu}
			|\xi \cdot \hat z|^2
		\mathd\mu(z)
		+
		 \int_{\RR^N}
			\frac{1-\cos( z\cdot \xi)}{|z|^2}
		\mathd\nu(z)
		\qquad\text{for every }\xi\in\RR^N.
	\end{equation}
	Then, by dividing \eqref{eq:mu-nu-lambda} by $|\xi|^2$ and
	letting $|\xi|\to+\infty$, we obtain
	\begin{equation}\label{eq:lambda-vs-mu}
	   \int_{\Snmu} | \hat \xi \cdot \sigma |^2 \mathd \lambda(\sigma)
        =
        \frac{1}{2} \int_{\Snmu}
		 	|\hat \xi \cdot \hat z|^2
		  \mathd\mu(z)
        \qquad\text{for every }\hat\xi\in\Snmu.
	\end{equation}        
	It follows that necessarily
	\[
	\int_{\RNmz}
				\frac{1-\cos( z\cdot \hat\xi)}{|z|^2}
	\mathd\nu(z) = 0
	\qquad\text{for every }\hat\xi\in\Snmu,
	\]
	but since $z\mapsto (1-\cos( z\cdot \hat\xi))/|z|^2$ is a positive function with support on the whole space for every $\xi$,
	we infer that
    the restriction of $\nu$ to $\RR^N\mz$ is $0$.
    By the definition of Lebesgue integral,
    we obtain that for any $f\in C_c(\RR^N)$
    \[
        \int_{\RR^N} f(z) \mathd \nu(z)
        = \nu(\{0\}) f(0),    
    \]
    that is, $\nu=\alpha \delta_0$
    for a suitable $\alpha\geq 0$.
    
    Finally, we conclude the proof of item \ref{ii}
    by observing that
    for any subsequence $\{\eps_{k_n}\}$
    the associated sequence of measures $\{\nu_{k_n}\}$ defined by \eqref{eq:nu-kn} must converge weakly-$\ast$ to $\alpha\delta_0$, and
    hence the whole sequence $\{\nu_k\}$ converges.
\end{proof}

\begin{remark}
\label{rk:quad}
   For each $\lambda\in \scrM(\mathbb{S}^{N-1})$,
   let us define the positive semi-definite symmetric matrix
   \[
    A_\lambda \coloneqq \int_{\Snmu} \sigma \otimes \sigma \mathd \lambda(\sigma).
   \]
   By employing this notation,
   the functional $\scrF$ in \eqref{eq:limit-func} rewrites as
    \begin{equation*}
        \scrF[u]= \int_{\RR^N} A_\lambda\nabla u\cdot\nabla u\mathd x
    \end{equation*}
    for every $u\in H^1(\mathbb{R}^N)$.
    
    As we observed in the previous proof, under the assumptions of \autoref{thm:main}
    the measure $\lambda$ in \eqref{eq:BBM} and the measure $\mu$ obtained by the compactness argument need not be the same.
    However, equality \eqref{eq:lambda-vs-mu} expresses the fact that the associated matrices $A_\lambda$ and $A_\mu$ do coincide.
\end{remark}


\section{Discussion and perspectives}\label{sec:final}
In what follows,
we first present an alternative formulation
of condition \ref{i} in \autoref{thm:main}, and
we then compare our results with previous ones in other contributions.
In particular, we explain
how some classes of kernels that have been considered in the literature  are encompassed by our theory.
We conclude by outlining possible future investigations.

\subsection{L\'evy conditions and reformulation of \ref{i}}
\label{subs:cond1}
As we recalled in \autoref{sec:intro},
the research on nonlocal-to-local formulas has been focused on sufficient conditions.
It must be however mentioned that
necessary conditions for the finiteness of the nonlocal energies in \eqref{eq:energy} have been devised as well, and
they are sometimes referred to as {\em L\'evy conditions}.
It is indeed known that,
when $u\in H^1(\RR^N)$,
an $\eps$-uniform upper bound on the functionals in \eqref{eq:energy}
entails a certain summability close to the origin and a decay at infinity.
Precisely, the following can be shown:

	\begin{theorem}\label{thm:levy}
	Suppose that for every $u\in H^1(\RR^N)$
	there exists $c\coloneqq c(u)\geq0$ such that
	$\scrF_\eps[u]\leq c$ for all $\eps\in J$.
	Then, the family $\{\rho_\eps\}$ fulfils the L\'evy conditions, that is,
	there exists $M\geq0$ such that
		\begin{equation*}
			\int_{B(0,1)} \rho_\eps(z) \mathd z
				+ \int_{B(0,1)^c} \frac{\rho_\eps(z)}{| z |^2} \mathd z
			\leq M
		\qquad\text{for every $\eps\in  J$.}
		\end{equation*}
	\end{theorem}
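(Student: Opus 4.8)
The plan is to reduce the statement, via the Fourier representation of \autoref{lemma:fourier}, to a lower bound for $\hat\scrF_\eps$ evaluated at a single, explicit radial test function, and then to read off the L\'evy inequality from that bound.

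First I would translate the hypothesis through \autoref{lemma:fourier}: the assumption that $\scrF_\eps[u]\le c(u)$ for all $\eps\in J$ means exactly that, for every $\psi\in L^2_w(\RR^N)$, one has $\sup_{\eps\in J}\hat\scrF_\eps[\psi]<+\infty$, with $\hat\scrF_\eps$ as in \eqref{eq:energia-F}. It therefore suffices to exhibit \emph{one} radial $\psi\in L^2_w(\RR^N)\mz$ for which $\hat\scrF_\eps[\psi]$ controls, uniformly in $\eps$, the left-hand side of the L\'evy inequality; a convenient choice is $\psi(\xi):=(1+|\xi|^2)^{-s}$ with $s>(N+2)/4$, which lies in $L^2_w(\RR^N)$, so that $\psi=\hat u$ for some $u\in H^1(\RR^N)$.

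Next, by Tonelli's theorem and the rotation argument already used in the proof of \autoref{stm:concentration} (here exploiting that $\psi$ is radial), I would rewrite
\[
  \hat\scrF_\eps[\psi]=\int_{\RR^N}\frac{\rho_\eps(z)}{|z|^2}\,h(|z|)\,\mathd z,
  \qquad
  h(r):=\int_{\RR^N}|\psi(\xi)|^2\big(1-\cos(r\,e_1\cdot\xi)\big)\,\mathd\xi .
\]
The core of the argument is the elementary claim that there is $c>0$, depending only on $\psi$, with $h(r)\ge c\,r^2/(1+r^2)$ for all $r>0$. Indeed, $h$ is continuous and strictly positive on $(0,+\infty)$; by dominated convergence, with dominating function $\tfrac12|\psi(\xi)|^2|\xi|^2\in L^1(\RR^N)$ (recall $\psi\in L^2_w$), one gets $h(r)/r^2\to\tfrac12\int_{\RR^N}|\psi(\xi)|^2(e_1\cdot\xi)^2\,\mathd\xi>0$ as $r\to0^+$; and by the Riemann--Lebesgue lemma $h(r)\to\|\psi\|_{L^2(\RR^N)}^2>0$ as $r\to+\infty$. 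Hence $r\mapsto h(r)(1+r^2)/r^2$ is continuous on $(0,+\infty)$ with strictly positive limits at both endpoints, so it is bounded below by some $c>0$.

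Finally, since $h(|z|)/|z|^2\ge c/(1+|z|^2)\ge c/2$ on $B(0,1)$ and $h(|z|)/|z|^2\ge c/(1+|z|^2)\ge c/(2|z|^2)$ on $B(0,1)^c$, I would conclude
\[
  \int_{B(0,1)}\rho_\eps(z)\,\mathd z+\int_{B(0,1)^c}\frac{\rho_\eps(z)}{|z|^2}\,\mathd z
  \le\frac{2}{c}\,\hat\scrF_\eps[\psi]=\frac{2}{c}\,\scrF_\eps[u]\le\frac{2}{c}\,c(u)=:M
\]
for every $\eps\in J$, which is the asserted uniform bound. The only mildly delicate point is the uniform lower bound on $h$, i.e.\ verifying the two asymptotics together with the positivity and continuity of $h$ on compact subintervals; the rest is a routine change of variables and a splitting of the integration domain.
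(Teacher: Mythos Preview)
Your proof is correct and follows the same overall strategy the paper sketches: pass to the Fourier side via \autoref{lemma:fourier}, fix a single radial test function $\psi\in L^2_w(\RR^N)$, use the rotation trick to reduce to a one-variable profile $h(|z|)$, and read off the L\'evy bound from a pointwise lower estimate $h(r)\gtrsim r^2/(1+r^2)$. The only real difference is in how that lower estimate is obtained. The paper, via the adapted proof of \autoref{stm:concentration}, writes out the inner integral in spherical coordinates and bounds $1-(2/t)\sin(t/2)$ explicitly on $(0,1]$ and $(1,\infty)$ separately. You instead argue softly: $h$ is continuous and strictly positive on $(0,\infty)$, $h(r)/r^2$ has a positive limit at $0$ by dominated convergence, and $h(r)$ has a positive limit at $\infty$ by Riemann--Lebesgue, so $r\mapsto h(r)(1+r^2)/r^2$ is bounded away from zero. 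Your route is shorter and sidesteps the dimension-dependent computation; the explicit approach has the advantage of yielding a constant that can in principle be tracked.
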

    For a proof,
    we refer, e.g.,~to the recent contribution \cite[Thm.~2.1]{FK22}
    (the authors work under radiality assumptions on the kernels,
    but for the result at stake this does not play a role).
    Alternatively,
    we note that the argument in the proof of \autoref{stm:concentration}
    may be adapted to establish the previous proposition:
    it is enough to work
	with a fixed test function $\psi\in L^2_w(\RR^N)$.

When the bound in \autoref{thm:levy} holds only asymptotically, that is,
$\limsup_{\eps\to 0} \scrF_\eps[u]\leq c$,
it can be shown that
    \begin{gather}\label{eq:levy2}
    \limsup_{\eps\to 0}
    \int_{\RR^N}\frac{\rho_{\eps}(z)}{1+|z|^2}\mathd z\leq M.
    \end{gather}
Such bound is necessary, but not sufficient
for the one in \ref{i}:
as a counterexample, consider
for $N=1$ the constant family $\rho_\eps\equiv 1$.
As we observed in \autoref{sec:intro}, indeed,
condition \ref{i} may be regarded as a uniform decay requirement on the kernels.
In more precise terms, the following holds:

    \begin{lemma}\label{prop:eq}
    Condition \ref{i} is equivalent to the following:
    \begin{itemize}
    \item[(i')]\label{i'} There exists $\tilde M\geq 0$ such that for every $R>0$ there holds
    \begin{gather}\label{eq:i-alt}
        \limsup_{k\to+\infty}
        \int_{\RR^N}\frac{\rho_{\eps_k}(z)}{R^2+|z|^2}\mathd z
        \leq\frac{\tilde M}{R^2}.
    \end{gather}
    \end{itemize}
    \end{lemma}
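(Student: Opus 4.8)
The plan is to prove the two implications separately, in both cases by comparing the weight $1/(R^2+|z|^2)$ with $1/R^2$ inside the ball $B(0,R)$ and with $1/|z|^2$ on its complement, and then passing to the $\limsup$; the equivalence will follow from elementary pointwise inequalities together with the subadditivity of $\limsup$. No genuine obstacle is expected — the only point deserving a little care is that the constant is not preserved, which is harmless since \ref{i} and (i') are existential statements.

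For the implication \ref{i} $\Rightarrow$ (i'), I would split $\RR^N = B(0,R)\cup B(0,R)^c$ and use that $R^2+|z|^2\geq R^2$ on $B(0,R)$, while $R^2+|z|^2\geq |z|^2$ on $B(0,R)^c$. Summing the two resulting bounds gives, for every $k\in\NN$,
\[
\int_{\RR^N}\frac{\rho_{\eps_k}(z)}{R^2+|z|^2}\,\mathd z
\leq \frac{1}{R^2}\left[\int_{B(0,R)}\rho_{\eps_k}(z)\,\mathd z + R^2\int_{B(0,R)^c}\frac{\rho_{\eps_k}(z)}{|z|^2}\,\mathd z\right].
\]
Taking the $\limsup$ as $k\to+\infty$ and invoking \eqref{eq:nec} then yields \eqref{eq:i-alt}, in fact with $\tilde M = M$.

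For the converse, I would exploit the reverse inequalities, available because $R^2+|z|^2\leq 2R^2$ on $B(0,R)$ and $R^2+|z|^2\leq 2|z|^2$ on $B(0,R)^c$ (where $|z|\geq R$). These give
\[
\int_{B(0,R)}\rho_{\eps_k}(z)\,\mathd z\leq 2R^2\int_{\RR^N}\frac{\rho_{\eps_k}(z)}{R^2+|z|^2}\,\mathd z,
\qquad
R^2\int_{B(0,R)^c}\frac{\rho_{\eps_k}(z)}{|z|^2}\,\mathd z\leq 2R^2\int_{\RR^N}\frac{\rho_{\eps_k}(z)}{R^2+|z|^2}\,\mathd z.
\]
Adding the two estimates, using the subadditivity of $\limsup$ and then \eqref{eq:i-alt}, one finds that the left-hand side of \eqref{eq:nec} is bounded by $4\tilde M$, so that \ref{i} holds with $M = 4\tilde M$. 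This completes the equivalence.
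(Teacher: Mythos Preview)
Your proof is correct and rests on the same pointwise comparison as the paper's: on $B(0,R)$ one has $R^2\leq R^2+|z|^2\leq 2R^2$, while on $B(0,R)^c$ one has $|z|^2\leq R^2+|z|^2\leq 2|z|^2$. The paper first performs the change of variables $z\mapsto Rz$ to reduce to $R=1$ and then applies the same inequalities on $B(0,1)$ and $B(0,1)^c$, obtaining $M=2\tilde M$ in the converse direction; your version avoids the rescaling and works directly at scale $R$, which is a bit more streamlined (and the mention of subadditivity of $\limsup$ is harmless but unnecessary, since your bound holds pointwise in $k$).
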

    \begin{proof}
We first show that \eqref{eq:nec} implies \eqref{eq:i-alt}. Fix $R>0$. After a change of variable, \eqref{eq:nec} rewrites as
\begin{gather*}
            \limsup_{k\to+\infty}\left[
            \int_{B(0,1)} \rho_{\eps_k}(Rz)\mathd z
            +
             \int_{B(0,1)^c} \frac{\rho_{\eps_k}( Rz)}{| z |^2} \mathd z
            \right]
            \leq \frac{M}{R^N}.
\end{gather*}
The conclusion follows then by observing that
$$\int_{B(0,1)} \rho_{\eps_k}(Rz)\mathd z
            +
             \int_{B(0,1)^c} \frac{\rho_{\eps_k}( Rz)}{| z |^2} \mathd z\geq \int_{\RR^N}\frac{\rho_{\eps_k}(Rz)}{1+|z|^2}\mathd z
            $$
 and by performing a further change of variables.  

 Conversely, assume that \eqref{eq:i-alt} holds. Then, for every $R>0$ a change of variable yields
 $$\limsup_{k\to +\infty}\int_{\RR^N}\frac{\rho_{\eps_k}(Rz)}{1+|z|^2}\mathd z\leq \frac{\tilde M}{R^N}.$$
 Since the real function $t\mapsto {t^2}/{(1+t^2)}$ is increasing on the positive real line, we find
 \begin{align*}
     \int_{\RR^N}\frac{\rho_{\eps_k}(Rz)}{1+|z|^2}\mathd z&\geq\frac12\int_{B(0,1)}\frac{\rho_{\eps_k}(Rz)}{1+|z|^2}\mathd z+\int_{B(0,1)^c}\frac{|z|^2}{1+|z|^2}\frac{\rho_{\eps_k}(Rz)}{|z|^2}\mathd z\\
     &\geq \frac{1}{2}\left(\int_{B(0,1)} \rho_{\eps_k}(Rz)\mathd z
            +
             \int_{B(0,1)^c} \frac{\rho_{\eps_k}( Rz)}{| z |^2} \mathd z\right).
 \end{align*}
 A further change of variable entails \eqref{eq:nec}.
\end{proof}

\begin{remark}
We observed that \eqref{eq:levy2} is necessary for \eqref{eq:nec} to hold.
On the other hand,
a sufficient condition not involving the parameter $R$ is the following: there exists
an infinitesimal family $\{\omega_\eps\} \subset (0,+\infty)$ such that 
    \begin{gather}\label{eq:omegaeps}
    \limsup_{\eps\to 0}
        \int_{\RR^N}\frac{\rho_{\eps}(z)}{1+\omega_{\eps}|z|^2}\mathd z
    < +\infty.
    \end{gather} 
This condition is however stronger than \ref{i}:
to see this, given a family $\{\omega_\eps\}$ as above,
observe that for $N=1$
the kernels $\rho_\eps(z) \coloneqq \omega_\eps^{1/4}$ fulfil \eqref{eq:nec}, but not \eqref{eq:omegaeps}.
\end{remark}

\subsection{$L^1$ and fractional kernels}
In \cite{BBM01} the authors proved their nonlocal-to-local formula
under the assumption that
the kernels $\rho_\eps$ are standard mollifiers.
A more general version of their result is the following:

\begin{theorem}[cf. Thm. 1 in \cite{Pon04}]\label{thm:ponce}
	Let $p\in(1,+\infty)$ be fixed.
	For every $\eps\in J$,
	let $\rho_\eps\colon \RR^N\to [0,+\infty)$ be a function
	with $\| \rho_\eps/2 \|_{L^1(\RR^N)}=1$.
	Suppose also that for every $\delta>0$
	\begin{equation}\label{eq:concentr}
		\lim_{\eps\to 0} \int_{B(0,\delta)^c} \rho_\eps(z) \mathd z = 0.
	\end{equation}
	Then, for any $u\in W^{1,p}(\RR^N)$
	there exists $c>0$ such that
	\begin{equation*}
		\int_{\RR^N \times \RR^N}
					\rho_\eps (x-y)
					\frac{| u (x) - u (y) |^p}{| x - y |^p} 
				\mathd y \mathd x \leq c
		\quad\text{for every } \eps\in J.
	\end{equation*}
	Besides, there exist
	an infinitesimal sequence $\{\eps_k\}\subset J$ and
	a positive Radon measure $\lambda$ on the unit sphere $\Snmu$
	that depends only on $\{\rho_{\eps}\}$
	such that $\int_{\Snmu} \mathd\lambda = 1$ and 
	\begin{equation}\label{eq:ponce}
			\lim_{k\to+\infty} \frac{1}{2} \int_{\RR^N \times \RR^N}
						\rho_{\eps_k}(x-y)
						\frac{| u (x) - u (y) |^p}{| x - y |^p} 
					\mathd y \mathd x
			= \int_{\RR^N} \int_{\Snmu} | \nabla u(x) \cdot \sigma |^p \mathd \lambda(\sigma) \mathd x
	\end{equation}
	for every $u\in W^{1,p}(\RR^N)$.
\end{theorem}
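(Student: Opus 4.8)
The plan is to reprove this by the classical Bourgain--Brezis--Mironescu--Ponce method, whose three ingredients are a change of variables, a uniform difference-quotient estimate, and a weak-$\ast$ compactness argument for the ``angular marginals'' of the kernels. First I would change variables $z=x-y$ and use Fubini to write, for each $\eps\in J$ and each $u\in W^{1,p}(\RR^N)$,
\[
    \frac12\int_{\RR^N\times\RR^N}\rho_\eps(x-y)\frac{|u(x)-u(y)|^p}{|x-y|^p}\mathd y\mathd x
    =\frac12\int_{\RR^N}\rho_\eps(z)\Bigl(\int_{\RR^N}\frac{|u(x)-u(x-z)|^p}{|z|^p}\mathd x\Bigr)\mathd z .
\]
Next I would record the elementary bound $\int_{\RR^N}|u(x)-u(x-z)|^p\mathd x\le|z|^p\,\|\nabla u\|_{L^p(\RR^N)}^p$, valid for $u\in W^{1,p}(\RR^N)$: one proves it for $u\in C_c^\infty$ by writing $u(x)-u(x-z)=\int_0^1\nabla u(x-z+tz)\cdot z\,\mathd t$ and invoking Jensen's inequality and Fubini, then extends it by density. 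Since $\|\rho_\eps/2\|_{L^1(\RR^N)}=1$, this already yields the first assertion of the theorem, with $c=\|\nabla u\|_{L^p(\RR^N)}^p$, uniformly in $\eps$.

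To encode the directional behaviour of the kernels I would then define, for each $\eps\in J$, the probability measure $\lambda_\eps$ on $\Snmu$ by $\lambda_\eps(E)\coloneqq\tfrac12\int_{\{z\neq 0\,:\,\hat z\in E\}}\rho_\eps(z)\mathd z$; it has unit mass because $\|\rho_\eps/2\|_{L^1(\RR^N)}=1$. Weak-$\ast$ compactness of probability measures on the compact set $\Snmu$ provides an infinitesimal sequence $\{\eps_k\}\subset J$ and a Radon measure $\lambda$ on $\Snmu$ with $\lambda(\Snmu)=1$ and $\lambda_{\eps_k}\stackrel{\ast}{\rightharpoonup}\lambda$. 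Crucially, $\lambda$ is built from $\{\rho_\eps\}$ alone, and the \emph{same} sequence will serve for every $u$. To prepare the passage to the limit I would, for $u\in C_c^\infty(\RR^N)$, set $g_\sigma(r)\coloneqq r^{-p}\int_{\RR^N}|u(x)-u(x-r\sigma)|^p\mathd x$ and $G(\sigma)\coloneqq\int_{\RR^N}|\nabla u(x)\cdot\sigma|^p\mathd x$, and establish two facts: $g_\sigma(r)\le\|\nabla u\|_{L^p(\RR^N)}^p$ for all $r>0$ and $\sigma\in\Snmu$ (from the bound above), and $g_\sigma(r)\to G(\sigma)$ as $r\to0^+$ \emph{uniformly} in $\sigma$, with $G$ continuous on $\Snmu$.

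Granting these, I would carry out the limit first for $u\in C_c^\infty(\RR^N)$. Writing the inner integral in polar coordinates and splitting $\RR^N$ into $B(0,\delta)$ and its complement, the contribution of $B(0,\delta)^c$ is at most $\tfrac12\|\nabla u\|_{L^p(\RR^N)}^p\int_{B(0,\delta)^c}\rho_\eps(z)\mathd z$, which tends to $0$ as $\eps\to0$ by the concentration hypothesis \eqref{eq:concentr}; on $B(0,\delta)$ I would replace $g_{\hat z}(|z|)$ by $G(\hat z)$ at the cost of an error controlled by $\sup_{0<r<\delta,\ \sigma\in\Snmu}|g_\sigma(r)-G(\sigma)|$, and identify the main term as $\int_{\Snmu}G\,\mathd\lambda_\eps^\delta$, where $\lambda_\eps^\delta$ is the analogue of $\lambda_\eps$ with $\rho_\eps$ restricted to $B(0,\delta)$. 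Since $\lambda_\eps^\delta$ and $\lambda_\eps$ differ by a measure whose mass vanishes as $\eps\to0$, one still has $\lambda_{\eps_k}^\delta\stackrel{\ast}{\rightharpoonup}\lambda$, and continuity of $G$ gives $\int_{\Snmu}G\,\mathd\lambda_{\eps_k}^\delta\to\int_{\Snmu}G\,\mathd\lambda$. Letting first $k\to\infty$ and then $\delta\to0$ yields \eqref{eq:ponce} for smooth $u$. To reach all $u\in W^{1,p}(\RR^N)$ I would observe that $u\mapsto\bigl(\tfrac12\int_{\RR^N\times\RR^N}\rho_\eps(x-y)|u(x)-u(y)|^p\,|x-y|^{-p}\,\mathd x\,\mathd y\bigr)^{1/p}$ is a seminorm dominated, uniformly in $\eps$, by $\|\nabla u\|_{L^p(\RR^N)}$ (by Step 1 applied to $u-v$), and that the same holds for the limiting seminorm $u\mapsto\bigl(\int_{\RR^N}\int_{\Snmu}|\nabla u\cdot\sigma|^p\,\mathd\lambda\,\mathd x\bigr)^{1/p}$; a routine approximation of $u\in W^{1,p}$ by $C_c^\infty$ functions then transfers \eqref{eq:ponce} to all of $W^{1,p}(\RR^N)$ along $\{\eps_k\}$.

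The main obstacle I anticipate is the uniform-in-direction convergence $g_\sigma(r)\to G(\sigma)$ together with the continuity of $G$ on $\Snmu$: this is the only step where one genuinely has to handle a difference of $p$-th powers with care (via a Taylor expansion of $u$ and the inequality $\bigl||a|^p-|b|^p\bigr|\le p(|a|+|b|)^{p-1}|a-b|$, using that $u$ is smooth with compact support so that all difference quotients live in a fixed compact set), and it is precisely what forces the detour through smooth functions before the density argument. Everything else amounts to bookkeeping with the concentration assumption \eqref{eq:concentr} and the weak-$\ast$ convergence of the angular marginals $\lambda_\eps$.
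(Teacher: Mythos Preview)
Your proposal is a correct and essentially complete sketch of the classical Bourgain--Brezis--Mironescu/Ponce argument. There is no proof in the paper to compare it with: \autoref{thm:ponce} is quoted from \cite{Pon04} as background material, not proved. The paper's own results concern the quadratic case $p=2$ and are obtained by an entirely different route, namely the Fourier transform (\autoref{lemma:fourier}, \autoref{stm:growth}, \autoref{stm:cpt}); the only contact with \autoref{thm:ponce} is \autoref{stm:compar}, where it is checked that the normalization and concentration hypotheses of \autoref{thm:ponce} imply conditions \ref{i} and \ref{ii} of \autoref{thm:main}, so that for $p=2$ the paper's framework recovers the conclusion as a special case.

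Your direct argument has the advantage of working for all $p\in(1,+\infty)$ without any Fourier machinery, and your identification of the angular marginals $\lambda_\eps$ and their weak-$\ast$ limit is precisely Ponce's construction (cf.\ also Step~3 in the proof of \autoref{stm:cpt}, which adapts the same idea). The one step you flag as delicate---the uniform-in-$\sigma$ convergence $g_\sigma(r)\to G(\sigma)$ for $u\in C_c^\infty$---is indeed the crux, and your proposed treatment via Taylor expansion and the elementary inequality for differences of $p$-th powers is the standard way to handle it.
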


We now show how
the class of kernels considered in the theorem above falls within our theory.

\begin{example}[$L^1$ kernels]\label{stm:compar}
    Let $\{\rho_\eps\}_{\eps\in J}$ be a family of kernels as in \autoref{thm:ponce}.
    A direct check shows that
    the normalization condition implies \eqref{eq:suff}.
    Besides, for every $f\in C_c(\RNmz)$ 
    there exists $\delta>0$ so small that
    \[
        \int_{\RNmz} \rho_\eps(z)f(z) \mathd z
        = \int_{B(0,\delta)^c} \rho_\eps(z)f(z) \mathd z.
    \]
    It hence follows from \eqref{eq:concentr} that
    \[
        \lim_{\eps\to 0}\int_{\RNmz} \rho_\eps(z)f(z) \mathd z
        = 0,
    \]
    which entails, similarly to the proof of \autoref{cor:suff}, that
    the weak-$\ast$ limit of the associated sequence in \eqref{eq:nueps} is a multiple of $\delta_0$.
\end{example}

As we commented in \autoref{sec:intro},
fractional kernels are not exactly covered by \autoref{thm:ponce}.
With the next example,
we see how they fit in our framework.

\begin{example}[Fractional kernels]\label{stm:frac}
    Given $s\in (0,1)$ and $u\in L^2(\RR^N)$,
    the (normalised) {\em $s$-Gagliardo seminorm} of $u$ is defined by
    \begin{equation*}
        \scrG_s[u]\coloneqq
        \frac{1-s}{2}\int_{\RR^N\times \RR^N}
        \frac{| u (x) - u (y) |^2}{| x - y |^{N+2s}}\mathd y  \mathd x.
    \end{equation*}
    Such functional corresponds to the one in \eqref{eq:energy}
    upon selecting
    \begin{gather*}
        \eps \coloneqq 1-s,
        \qquad
        \rho_\eps(z)=\rho_\eps^{\scrG}(z) \coloneqq \frac{\eps}{2|z|^{N-2\eps}}.
    \end{gather*}
    Note that in this case $\rho_\eps\notin L^1(\RR^N)$. 
    On the other hand, for every $\delta>0$ and
    for suitable $N$-depending constants $\alpha_0,\alpha_1>0$,
    we have
    \begin{gather*}
        \int_{B(0,\delta)} \frac{\eps}{2|z|^{N-2\eps}} \mathd z = \alpha_0 \delta^{2\eps}, \\
        \int_{B(0,\delta)^c} \frac{\eps}{2|z|^{N-2\eps+2}} \mathd z = \alpha_1\frac{\eps}{(1-\eps)\delta^{2(1-\eps)}}.
    \end{gather*}
    In particular, by taking, e.g.,~$M=2$,
    we see that \eqref{eq:suff} holds.
    Besides, for every $R>\delta>0$ we have
    \begin{gather*}
        \lim_{\eps\to 0} \int_{B(0,R)\setminus B(0,\delta)}
        \frac{\eps}{2|z|^{N-2\eps}} \mathd z
        = \alpha_0 \lim_{\eps\to 0} (R^{2\eps}-\delta^{2\eps})
        = 0,
    \end{gather*}
    whence, similarly to the previous example,
    we infer that
    $\{\rho_\eps^{\scrG}\}$ converges locally weakly-$\ast$ to a multiple of the Dirac delta in $0$
    in the sense of Radon measures.
\end{example}

\subsection{Future directions}\label{subsec:comments}
In this paper we provided sufficient and necessary conditions on a family of kernels $\{\rho_\eps\}$
for the nonlocal functionals in \eqref{eq:energy} to converge to a variant of the Dirichlet integral
for every $u\in H^1(\RR^N)$.
It is natural to wonder
whether such characterization still holds for the more general functionals
considered in \cite{BBM01}.
We conjecture that this is the case.
Namely,
given a family of positive, measurable kernels $\{\rho_\eps\}_{\eps\in J}$,
we conjecture that
for any open set $\Omega\subseteq \RR^N$ with Lipschitz boundary and for any $p\in [1,+\infty)$
the following conditions are necessary and sufficient for the BBM formula to hold
for every $u\in W^{1,p}(\Omega)$ when $p>1$
or $u\in BV(\Omega)$ when $p=1$:
\begin{enumerate}[label=\emph{(\roman*)}]
    \item there exists $M\geq0$ such that
	   	for every $R>0$ it holds
        \begin{gather*}
	    \limsup_{\eps\to0}\int_{B(0,R)} \rho_\eps(z)\mathd z \leq M, \\
        \limsup_{\eps\to0} \int_{B(0,R)^c} \frac{\rho_\eps( z)}{| z |^p} \mathd z \leq  \frac{M}{R^p}
         \qquad\text{when $\Omega$ is unbounded};
        \end{gather*}
	 \item there exists an infinitesimal sequence $\{\eps_k\}\subset J$ such that
    the sequence of measures $\{\nu_k\}\subset\scrM_{\rm loc}(\RR^N)$ defined as in \eqref{eq:nuk}
    converges locally weakly-$\ast$ to $\alpha \delta_0$ in the sense of Radon measures for a suitable $\alpha \geq 0$.
\end{enumerate}

We remind that it is known that
the BBM formula fails
when the boundary of $\Omega$ is not regular enough
(see~\cite[Rmk. 1]{Pon04}, and \cite{LS11} on a possible remedy).

Naturally, for $p\neq 2$ and $\Omega \subsetneq \RR^N$
the Fourier approach is not viable anymore
(but when $p\neq 2$ and $\Omega = \RR^N$
techniques of Fourier analysis may still be invoked
by resorting to the Littlewood-Paley theory,
as it is done in the recent contribution \cite{BSY22}).
A possible strategy
to establish the necessity of the previous conditions
is to follow the proof of \cite[Thm.~2.1]{FK22}
and employ rescaled test functions as in the proof of \autoref{stm:concentration}.

A second research direction concerns the variational convergence of the nonlocal energies to local ones,
in the same spirit as \cite[Thm. 8 and Cor. 8]{Pon04}.
For a thorough treatment of $\Gamma$-convergence we refer to the monograph \cite{DM93}.
It is not difficult to see that
the conditions in \autoref{cor:suff} are {\em sufficient} for the $\Gamma$-convergence of $\{\hat{\scrF}_\eps\}$ to $\hat{\scrF}$ when it is known that the limiting function $u$ has Sobolev regularity;
under this extra assumption,
by the inverse Fourier transform,
the $\Gamma$ convergence of $\{\scrF_\eps\}$ to $\scrF$ is recovered.
Proving that they are also {\em necessary} would require a refinement of \autoref{stm:concentration},
again possibly resorting to the approach of \cite[Thm.~2.1]{FK22};
note, in particular, that 
in our analysis \eqref{eq:nec} is derived from a $\Gamma$-limsup type inequality (see~\eqref{eq:poincare}).

$\Gamma$-convergence results are usually complemented
by equi-coercivity statements,
because in this way
convergences of minima and minimizers are obtained
thanks to the so-called fundamental theorem of $\Gamma$-convergence,
see~e.g.~\cite[Cor. 7.20]{DM93}.
Such results also have a role in devising the domain of the $\Gamma$-limit.
The convergence properties
of sequences of $L^p$ functions
with equi-bounded nonlocal energy
were considered already in \cite[Thm. 4]{BBM01};
refined results in the same vein have been obtained in \cite[Thm. 1.2 and 1.3]{Pon04-2} and,
more recently, in \cite[Thm. 4.2]{AAB22}.
Another natural question that is left open from our analysis is
what conditions on the kernels $\{\rho_\eps\}$ are necessary and sufficient for such a compactness result to hold.
It is expected that some requirement on the support of the measures $\mu$ in \autoref{thm:compactness} has to be enforced
(cf.~\cite[Thm. 5]{Pon04} and \cite[Thm. 3.1]{AAB22}).

\section*{Acknowledgments}
The research presented in this paper benefited from the participation of V.P. in the workshop {\it Nonlocality: Analysis, Numerics and Applications}
held at the Lorentz Center (Leiden, Netherlands) on 4--7 October 2022.
V.P. is a member of INdAM-GNAMPA.
E.D. and V.P. acknowledge support
by the Austrian Science Fund (FWF) through projects F65, V 662, Y1292, and I 4052, as well as from OeAD through the WTZ grants CZ02/2022 and CZ 09/2023. 
G.~Di~F. acknowledges support from the Austrian Science Fund (FWF)
through the project {\emph{Analysis and Modeling of Magnetic Skyrmions}} (grant P-34609). G.~Di~F. thanks the Hausdorff Research Institute for Mathematics in Bonn for its hospitality during the Trimester Program \emph{Mathematics for Complex Materials}. G.~Di~F. also thanks TU Wien and MedUni Wien for their support and hospitality. 



{\normalsize
	\bibliography{bib-davoli-difratta-pagliari}
}
\end{document}